\documentclass[11pt]{amsart}

\usepackage{amsmath, amsthm, amssymb}
\usepackage[pdftex]{graphicx}
\usepackage{color}
\usepackage{enumerate}


\newtheorem{theorem}{Theorem}
\newtheorem{lemma}{Lemma}

\newtheorem{defn}{Definition}

\newtheorem{remark}{Remark}

\newcommand{\diver}{\text{div} \ }

\newcommand{\eqdef}{\overset{\mbox{\tiny{def}}}{=}}

\def\ss {\mu}
\def\Ddim {d}
\newcommand{\ba}{\begin{equation}}
\newcommand{\ea}{\end{equation}}
\newcommand{\bea}{\begin{eqnarray}}
\newcommand{\eea}{\end{eqnarray}}


\newcommand{\D}{\mathcal{D}}

\newcommand{\rr}{\mathcal{R}}

\newcommand{\Hper}{H }
\newcommand{\Lper}{L }
\newcommand{\Lty}{L^{\infty}}
\newcommand{\uv}{u_{\varepsilon}}
\newcommand{\uvt}{\partial_t  u_{\varepsilon}}
\newcommand{\uvk}{u_{\varepsilon_k}}

\newcommand{\Dld}{D_{1\delta}(t)}
\newcommand{\Dgd}{D_{2\delta}(t)}
\newcommand{\vark}{\varepsilon_k}

\newcommand{\ale}{a.e.\,\,}

\newcommand{\dpstyle}{\displaystyle}


\newcommand{\nn}{\nonumber}

\newcommand{\pd}{\partial}

\newcommand{\8}{\infty}

\newcommand{\ud}{\,\mathrm{d}}

 \newcommand{\pdf}{\pd_x^4}

  \newcommand{\I}{\mathbb{T}}

 \renewcommand{\d}{\,\text{d}}

\newcommand{\veps}{\varepsilon}

\definecolor{mygreen}{rgb}{0.1,0.75,0.2}

\title[Crystal Surface Models with Evaporation and Deposition]{Analysis of a continuum theory for broken bond crystal surface models with evaporation and deposition effects}

\author[Y. Gao]{Yuan Gao}
\address{Department of Mathematics, Duke University, Durham, NC}
\email{yuangao@math.duke.edu}

\author[J.-G. Liu]{Jian-Guo Liu}
\address{Department of Mathematics and Department of
  Physics, Duke University, Durham, NC}
\email{jliu@math.duke.edu}

\author[J. Lu]{Jianfeng Lu}
\address{Department of Mathematics, Department of Physics, and Department of Chemistry, Duke University, Durham, NC}
\email{jianfeng@math.duke.edu}

\author[J.L. Marzuola]{Jeremy L. Marzuola}
\address{Department of Mathematics, University of North Carolina at Chapel Hill \\ CB\#3250
  Phillips Hall \\ Chapel Hill, NC 27599}
\email{marzuola@math.unc.edu}

\begin{document}

\begin{abstract}
We study a $4$th order degenerate parabolic PDE model in $1$ dimension with a $2$nd order correction 
modeling the evolution of a crystal surface under the influence of both thermal fluctuations and evaporation/deposition effects.  First, we provide a non-rigorous derivation of the PDE from an atomistic model using variations on Kinetic Monte Carlo rates proposed by the last author with  Weare (PRE, 2013).  Then, we prove the existence of a global in time weak solution for the PDE by regularizing the equation in a way that allows us to apply the tools of Bernis-Friedman (JDE, 1990).  The methods developed here can be applied to a large number of $4$th order degenerate PDE models.    In an appendix, we also discuss the global smooth solution with small data in the Weiner algebra framework following recent developments using tools of the second author with Robert Strain (IFB, 2018). 
\end{abstract}

\maketitle

\begin{center}
  Dedicated to Peter Smereka (1959--2015) for his inspiration and
  insights.
\end{center}

\section{Introduction}

We explore here the limiting macroscopic evolution of a family of
microscopic models of dynamics on a $1$ dimensional crystal surface
experiencing fluctuations from both thermodynamic hopping as well as
evaporation/deposition effects.  The family of atomistic models from
which we start includes the well known (and well studied)
solid-on-solid (SOS) model \cite{Binh} and is remarkable, given its
simplicity, for its widespread use in large scale simulations of
crystal evolution \cite{pimpinelli1999physics}.  Our investigation
complements and extends the work by Krug, Dobbs, and Majaniemi in
\cite{KDM} and the last author with Weare \cite{MW1} on the
solid-on-solid model.  {  It is possible to extend the
  modeling and some of the PDE arguments to dimension $2$, but we work
  in the one-dimensional setting to make many of the calculations in
  the microscopic modeling section especially easier to follow
  notationally and also to allow clarity in the regularization
  arguments for the PDE analysis.}

We will study both the derivation of and the global solutions of the model with periodic boundary conditions given by \ba\label{pde}
\partial_t h = c_1\Delta e^{-\Delta h}+ c_2(1- e^{-\Delta h}), \quad
\mbox{in }\mathbb{T} \times (0, \infty) \ea with initial data
$h(x,0)=h_0$ of sufficient regularity to be discussed more carefully
below. Here $c_1>0$ and $c_2>0$ are physical constants which will be
set to $1$ for simplicity (after some suitable choice of units).

The derivation we present will follow very similarly the ideas of \textsc{Marzuola and Weare}
\cite{MW1} and \textsc{Smereka} \cite{Smereka}, while attempting to clarify some of the
choices of non-equilibrium dynamics and putting all concepts in the
notation characteristic of the statistical physics community. The
fourth order equation ($c_1 > 0, c_2 = 0$) is conservative and arises
from atoms hopping from one lattice site to the next with rates that
depend upon the local curvature.  The second order term stems from
interaction of the crystal with a gas of atoms and is a balance of the
effects of a constant rate of deposition and an evaporation rate that
is once again comparable to the local curvature.  The scalings of the
rates that make these two phenomena both comparable for large system
sizes will be discussed.

As noted above, the $4$th order component of the model arises where only thermodynamic fluctuations are considered.  Using a generalization of rates determined by bond breaking energies to describe a family of microscopic processes, a class of $4$th order PDEs with exponential mobility including \eqref{pde} with $c_2=0$ were derived and studied in \cite{MW1}.   The notion of mobility will be discussed in more detail in the analysis of \eqref{pde}, but essentially the mobility refers to the metric structure that arises an appropriately interpreted gradient descent approach to the dynamics.  In addition to being directly derived in \cite{MW1}, \eqref{pde} can be seen as a leading order approximation to the PDE model proposed in the last section of \cite{KDM} where the rates were build directly on a bond counting model.  Compared with the diffusion effect expressed by the $4$th order component of the model, the evaporation effect is a $2$nd order component first introduced in \cite{Spohn1993}.

Upon discussing the derivation of PDE model \eqref{pde}, we will
attempt to establish some properties of its solutions.  Much work has
recently gone into understanding PDEs of this type though mostly
without the $2$nd order correction.  Indeed, there has been recent
analytic progress in terms of global existence, characterization of
dynamics, construction of local solutions and classification of the
breakdown of regularity has been made on the related $H^{-1}$ steepest
descent flows predicted by the adatom rates with stemming from quadratic interaction potentials,
\begin{equation}
\label{Hm1expde}
\partial_t h = \Delta e^{- \Delta h}.
\end{equation}
See for instance
\cite{liu2016existence,liu2017analytical,gao2017gradient,xu2017existence,ambrose,LS2018,GBM2018},
each establishes existence in various ways and in the case of strong
solutions uniqueness. See also \cite{Gao2019, LLMM} for the case PDE models arising from rates that simply involve bond counting.  The methods applied involve a combination of
approaches to regularizing the model, Weiner algebra tools (for small
data highly regular solutions), modifying the tools from gradient
flows, etc.  We note that linearizing the exponential results in the
bi-Laplacian heat flow as the leading order flow, which could be a
means to prove local well-posedness using standard tools from
quasilinear parabolic equations.  However, there is a clear breakdown
of convex/concave symmetry for the model with exponential mobiility
that is not observed in the linear model for \eqref{Hm1expde} (see
\cite{MW1,LLMM}).

The paper will proceed as follows.  In Section \ref{deriv}, we describe the family of atomistic models that we consider and discuss the findings in \cite{KDM} in more detail.  Then, we proceed to follow the ideas laid out in \cite{MW1} to provide a framework for deriving \eqref{pde}.  In Section \ref{weak}, we prove global existence of weak solutions using a formulation of the modified biharmonic porous medium equation.  In Appendix \ref{sec:aprori}, we prove global existence of solutions with small initial data in the Weiner algebra.

\subsection*{Acknowledgements} This project was started while JLM was
on sabbatical at Duke University in the Spring of 2019.  JLM thanks
Anya Katsevich, Bob Kohn, Dio Margetis and Jonathan Weare for many
valuable conversations regarding modeling of kinetic Monte Carlo.  JGL
was supported by the National Science Foundation (NSF) grant
DMS-1812573 and the NSF grant RNMS-1107444 (KI-Net).  JL was supported
by the National Science Foundation via grant DMS-1454939.  JLM
acknowledges support from the NSF through NSF CAREER Grant
DMS-1352353.

\section{Generalized broken bond models}
\label{deriv}

 \subsection{Overview of microscopic system and its statistical mechanics}

 We will assume that the crystal surface consists of
 height columns described by
 \begin{equation}
 \label{hdef}
 h:=(h_i)_{i=1,\ldots N}
 \end{equation}
 with screw-periodic boundary conditions in the form
\begin{equation*}
	h_{i+N}=h_i+ \zeta  N \quad \forall i~,
\end{equation*}
where $\zeta$ is the average slope and each $h_i \in a \mathbb{Z}$ lives on a lattice with discrete height jumps given by some value $a \in \mathbb{R}$.  Below, on the whole crystal starting in Section \ref{KMC} we generically take $\zeta=0$, $a=1$,  however we will continue in the general setting here since we locally approximate the non-equilibrium dynamics below as equilibrium dynamics around a mean of fixed slope.  Locally, this can be seen to be an equilibrium for the dynamics we propose.  A schematic of the microscopic dynamics is given in Figure \ref{fig:cartoon}.

\begin{figure}
  \centering
  \includegraphics[width=8cm]{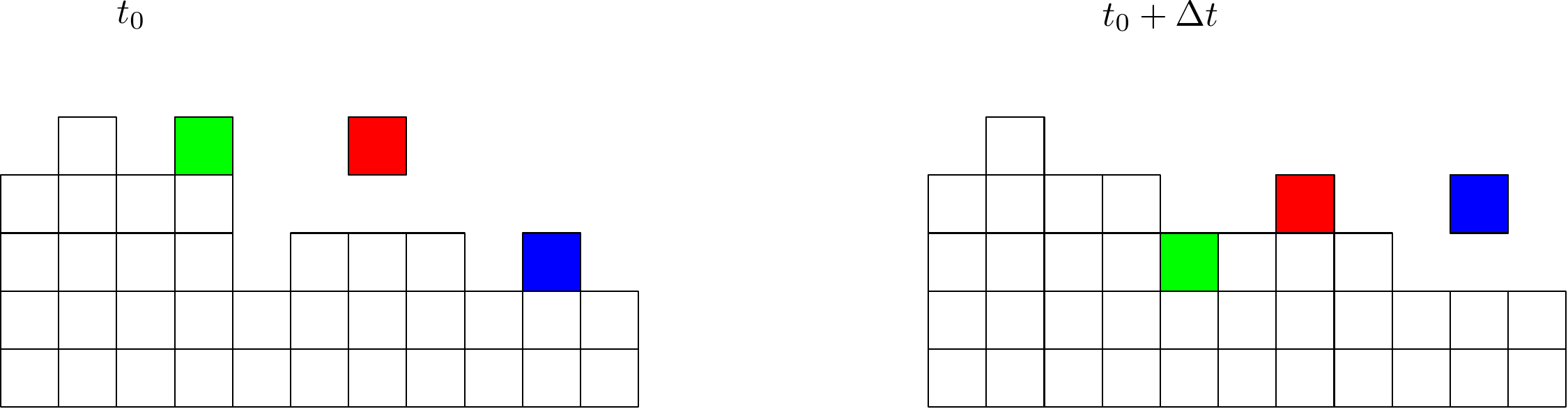}
  \caption{A cartoon of atoms in a crystal lattice in an initial configuration on the left and moving to a  likely new configuration on the right stemming from an atom jumping from one site to another (green), depositing on the sample (red) and evaporating from the sample (blue).}
  \label{fig:cartoon}
\end{figure}

The surface free energy of the general system equals
\begin{equation*}
E(T,h)=E_b+E_s~,
\end{equation*}
where $E_b$ is the bulk contribution, namely,
\begin{equation}
E_b=-2\gamma \frac{N}{a} +  \frac{\gamma \zeta} {2a} N,
\end{equation}
and $E_s$ is the relative energy (or the surface contribution), viz., 
\begin{equation*}
E_s=\frac{\gamma}{2a}\sum_{i=1}^N |h_i-h_{i-1}|	
\end{equation*}
by simply bond counting \cite{KDM,LS2018} with $\gamma$ is a
proportionality constant.  Instead of the simplistic model above,
taking $V(s) = |s|^p$ as the interaction potential, we assume a
generalized relative energy is given by
\begin{equation*}
E_s=\frac{\gamma}{2a}\sum_{i=1}^N V (h_i-h_{i-1}).	
\end{equation*}
Examples include the quadratic behavior $p=2$ due to elastic interactions \cite{Spohn1993} and the SOS bond
counting model $p=1$.  For simplicity, we will focus here on the $p=2$
case, as the $p=1$ model has similar structure but will require
further technical calculations.  From above, it is easy to see that in
fact taking $z_i = h_{i} - h_{i-1}$, we have that the surface energy
is actually a function of local slope, not of local height,
i.e. $E(T,h) = E(T,z)$ where $z :=(z_i)_{i=1,\ldots N}$.

In terms of the larger statistical mechanics picture, the partition
function over all possible states $h$ is then given by
\[
Z(T,h,N): = \sum_s e^{-\beta E_s}.
\]
(need introduce $\beta$ here...)
Hence, we easily see that we can write the Helmholtz free energy as
\[
F := -k_B T \ln Z.
\]
The Gibbs free energy 
\[
G(T,q,N) := \min_h \{  F(T,h,N) + q h \}
\]
can be seen as the Legendre transform of $F$ with respect to $h$.  The thermodynamic
potential is also the Legendre transformation of $F$ with respect to $N$,
\[
\Omega (T,h,\mu) := \min_N \{  F(T,h,N) - \mu N \}.
\]
We will see that the out of equilibrium dynamics for the system will
in particular follow by take expectation values with respect to Gibbs
measure conditioned around a local mean (or the most probable local
state).

\begin{enumerate}[(i)]

\item On the physical scale, we have $NM$ columns of atoms, and each column consists of $O(1)$ atoms. For this physical model, we have a microscopic dynamics defined (say through the master equation); the state is then described by a distribution function as
\[
\Psi (h_1, h_2, ..., h_{MN};t).
\]

\item We will partition the domain into $N$ boxes, each consists of $M$ columns, and we define mesoscopic variables of
\[
\bar{h}_k, \bar{z}_k, k = 1, \dots,N,
\]
they are the average height and average slope of each box, so that for example,
\[
\bar{z}_k = \frac{1}{(M-1)} (h_{(k+1)M} - h_{kM + 1}), \ k = 1, \dots, N.
\]
For the continuum limit, we will regard these functions as evaluations of a continuous analog at grid points, thus
\[
\bar{z}_k = \bar{z}( k/N )
\]
for $\bar{z}$ a function defined on $[0, 1]$ (the notation here is overloaded).  Note that if $M$ is large, we can view these averaged quantities taking continuous values.

\smallskip 
\item[] Connecting to the microscopic variables, we will assume that

\item $\Psi$ is given by a tensor product as given by the molecular
  chaos assumption as standard in kinetic theory \cite{Maxwell}
\[
 \Psi = f_1(h_1, h_2, ..., h_M) f_2(h_{M+1}, ..., h_{2M}) ... f_N(h_{(N-1)M+1} ... h_{NM}).
\]

\item Each $f_k$ is given as a Gibbs state, thus
\begin{equation}
\label{chempot}
\hspace{2cm} f_k(h) \propto \exp( \beta \mu_k \bar{h}_k - \beta V (\bar{z}_k)),
\end{equation}
where $\mu_k$  is the chemical potential and determined by shifting the mean of $z_k$ to fit a most probable state.  To derive the evolution equations for the averaged quantity $\bar{h}$ (and hence $\bar{z}$, we analyze the non-equilibrium dynamics of the generator of the microscopic process, following closely the work \cite{MW1}.
\end{enumerate}

\begin{remark}
  Let us remark that the thermodynamic setup we use for the
  hydrodynamic limit is very close in spirit with the one used by
  Smereka~\cite{Smereka}. The slight differences of the setup of
  \cite{Smereka} compared with the above framework are as follows.   (1) Instead of
  explicitly enforcing that the chemical potential $\mu_k$ being
  constant over each box containing $M$ columns.  A (local) chemical
  potential is assigned to each column in \cite{Smereka} with the
  implicit assumption that it is slowly varying (so that one can
  ``lump'' together several columns under a piecewise constant
  approximation to the chemical potential). (2) Correspondingly, the
  basic variable used in \cite{Smereka} is the difference of height
  between neighboring columns: $z_k = h_{k+1} - h_k$, while we have
  used $\bar{z}_k$ which is an averaged height difference over
  neighboring boxes of columns.  Our framework makes explicit the
  intermediate scale by grouping $M$ columns together; while it is
  implicitly assumed in \cite{Smereka}.
\end{remark}

 \subsection{Kinetic Monte Carlo atomistic model}
\label{KMC}

Our procedure of deriving the continuum theory mimics that of deriving
(generalized) hydrodynamics; the only assumption is the molecular
chaos (so that $F$ is a product measure) and local equilibrium (so
that $f_k$ is given as a generalized Gibbs state).

Let us rescale so that the system is defined on the unit interval
$[0, 1]$ with $N$ columns of atoms, and thus the lattice constant
becomes $\frac{1}{N}$.  We now denote the heights of these $N$ as a vector $h$ as in \eqref{hdef} with each $h_i \in \mathbb{Z}$, $i = 1, \ldots, N$.
Thus, before rescaling, we consider a crystal surface with width $N$.

We consider the continuum limit that $N \to \infty$ and average over windows of size $M \ll N$, but such that $M \to \infty$.
In particular, in this scaling, the
height function will converge to a $\mathcal{O}(1)$ function on $[0, 1]$.   This can be manifested by viewing each column in our model as a coarse-grained version of grouping $M$ columns together in an original physical model with
 $1 \ll M \ll N$, such that $\frac{M}{N} \to 0$ as $N \to \infty$.

Our dynamics will be specified by a continuous time Markov jump process.  The surface hopping part of the process evolves by jumps from one state $h=(h_i)_{i=1,\ldots N}$ in \eqref{hdef} to another state $J_\alpha^\gamma h$  is defined by transition
\[
h \mapsto J_\alpha^\gamma h,
\]
where
\[
J_\alpha^\gamma = J_\alpha J^\gamma\quad  \text{ for }\alpha, \gamma \in \{1,2, \cdots, N\}
\]
such that at $\tau \in \{1,2, \cdots, N\}$
\[
J_\alpha h (\tau)  :=
\begin{cases}
 h (\alpha)-1,& \tau =\alpha\\
 h (\tau), & \tau \neq \alpha
 \end{cases}
\]
and
\[
  J^\alpha h (\tau) := \begin{cases}
  h (\alpha)+1,& \tau=\alpha\\
  h (\tau),& \tau \neq\alpha.
  \end{cases}
\]
Note that the transition $h \mapsto J_\alpha^\gamma h$ preserves the mass of the crystal, $m=\sum_{\alpha \in \mathbb{T}_N} h (\alpha)$.

For any $g:\mathbb{T}_N \rightarrow \mathbb{R}$ define
 the symbols $\nabla^+ g (\alpha)$ and $\nabla^-g (\alpha)$ by
\[
\nabla^+ g (\alpha) :=  g(\alpha+1) - g(\alpha)\qquad\text{and}\qquad \nabla^- g(\alpha) := g(\alpha)-g(\alpha-1).
\]
Now that we have defined the transitions by which the crystal evolves we need to specify the rate at which those transitions occur.  To that end we first recall the \emph{coordination number} of \cite{MW1}, given by $n(\alpha)$ for $\alpha\in \mathbb{T}_N$ by
\begin{multline}\label{gcn}
n (t,\alpha) :=   \frac{1}{2} \big[ V(\nabla^+ J_\alpha  h (t,\alpha)) - V(\nabla^+ h (t,\alpha)) \\+ V(\nabla^- J_\alpha h (t,\alpha)) - V(\nabla^- h (t,\alpha)) \big].
\end{multline}
One can think of $n(\alpha)$ as the (symmetrized) energy cost associated with removing a single atom from site $\alpha$ on the crystal surface.

As seen in \cite{MW1}, we have that if $V( z) = |z|,$ which is the example considered in \cite{KDM}.  Then,
\[
n (\alpha) + 1=  \sum_{\substack{ \gamma \in\mathbb{T}_N \\ |\alpha-\gamma|=1}} \mathbf{1}_{(h (\alpha)\leq h (\gamma))}
\]
where
\[
\mathbf{1}_{(h (\alpha)\leq h (\gamma))} := \begin{cases} 1 & \text{if } h (\alpha)\leq h (\gamma)\\
0 & \text{otherwise}
\end{cases}.
\]
In words,  up to an additive constant (which amounts to a time rescaling), the coordination number is the number of neighbor bonds that need to be broken to free the atom at lattice site $\alpha$. If we suppose $V(z) = z^2.$  Then
\[
n (\alpha) - 2 = \nabla^+ h (\alpha) - \nabla^- h (\alpha),
\]
i.e., up to an additive constant, the coordination number is the discrete Laplacian of the surface at lattice site $\alpha.$

  The equilibrium probability for the surface gradients $\nabla_i^+ h (\cdot)$  is then the normalized Gibbs distribution
\begin{equation}
\label{eqprob}
\rho_N  \left(  \nabla^+ h (\cdot) \right) \propto \exp\left({- \beta \sum_{\substack{\alpha \in \mathbb{T}_N}} V(\nabla^+ h (\alpha))}\right).
\end{equation}
Note that our assumption that $V$ is symmetric obviates inclusion of terms in the sum involving $\nabla_i^-h (\cdot).$

We will assume that the atom at site $\alpha$ breaks the bonds with its nearest neighbors at a rate that is exponential in the coordination number.  Once those bonds are broken the atom chooses a neighboring site of $\alpha,$ for example  with $|\gamma -\alpha|=1$, uniformly and jumps there, i.e. $ h \mapsto J^\gamma_\alpha h .$
Since there are $2$ sites $\gamma$ with $|\gamma-\alpha|=1,$ the rate of a transition $
h \mapsto J_\alpha^\gamma h
$, $r$, is a standard adatom mobility with a given Arrhenius rates (as in \cite{KDM,MW1})
\[
r (t,\alpha) = \frac{1}{2} e^{- 2 \beta n (t,\alpha)}.
\]
As with $h$ we will occasionally omit the $t$ argument in $n$ and $r.$

We define the crystal slopes
\begin{equation}
z_i = \frac{h_{i+1}-h_i}{N^{-1}}.
\end{equation}
For a given inverse temperature parameter $\beta = \frac{1}{KT}$ with $K$ the standard Boltzmann constant, we will assume that evaporation are configured by the local geometry and hence occur with rates given by $r_{evap} = r_{evap} (z)$, while for deposition rate, $r_{dep}$, we will assume a constant rate.  More precisely, the evaporation rate function $r_{evap}$ depends on slopes at two consecutive sites and is given by
\begin{equation}
\label{eqn:rates}
r_{evap} (\beta, z_i, z_{i-1}) = \rho_{evap} e^{-\frac12 \beta N^{-p} [ V (z_i) - V (z_{i-1}) ] }.
\end{equation}
Note that the energy barrier given by $V(z) = |z|^p$ is the interaction potential (if $p=1$ this is the bond counting functional giving the adatom rates), but that here it
is rescaled proportional to $N$ (in physical terms, the energy barrier depends on the lattice parameter).\footnote{We may also scale the temperature so that a factor $N$ would arise in the exponent.}  For the rate of deposition, we simply take
\begin{equation}
\label{eqn:deprates}
r_{dep} (\beta, z_i, z_{i-1}) = \tau^{-1}_{dep} e^{-\frac12 \beta \mu },
\end{equation}
where $\mu$ represents a chemical potential difference between the reservoir and surface.  The constants $\rho_{evap}$ and $\tau^{-1}_{dep}$ will need to be scaled with $N$ below.

The above description of the evolution of the process $h$ is summarized by its generator $\mathcal{A}_N.$  Knowledge of the generator allows us as in \cite{MW1} to propose the evolution of any test function $\phi$ of the crystal surface is described by
\[
\phi (h(t,\alpha))-\phi(h(0,\alpha)) = \int_0^t \left[\mathcal{A}_N \phi \right](s,\alpha)
+ M_\phi (t,\alpha)
\]
where $M_\phi (t,\alpha)$ is a Martingale with $M_\phi (0,\alpha)=0$ and whose expectation at time $t$ (over realizations of $h$) given the history of $h$ up to time $s\leq t$ is simply its value at time $s.$  In particular, we assume that $\mathbf{E}\left[ M_\phi (t,\alpha)\right] = 0$ for all $t$ and $\alpha$ where $\mathbf{E}$ is used to denote the expectation over many realizations of the surface evolution from a particular initial profile.  For our process, the generator is
\begin{align}\label{gen}
& \mathcal{A}_N \phi (h) =  \sum_{\substack{\alpha,\gamma \in\mathbb{T}_N \\ |\alpha-\gamma|=1}} r_N(\alpha) \left( \phi (J_\alpha^\gamma h) - \phi (h)\right)  \\
& \hspace{.5cm} + \sum_{\alpha \in\mathbb{T}_N} \left[ r_{dep} (\alpha ) \left(  \phi (J^\alpha h) - \phi(h)\right)  - r_{evap} (\alpha) \left( \phi (J_\alpha h) - \phi (h)\right) \right]   . \notag
\end{align}
One can check that
\[
\langle g\,(\mathcal{A}_N \phi)\rangle_N =  \sum_{h} g\,(\mathcal{A}_N \phi)\,p_N (h) = \sum_{h} \phi \,(\mathcal{A}_N g)\,p_N (h) = \langle \phi\,(\mathcal{A}_N g)\rangle_N ,
\]
i.e. that $\mathcal{A}_N$ is self adjoint with respect to the $p_N$ weighted inner product.  The jump process defined by the rates above is reversible and ergodic with respect to $p_N.$

As we are interested in exponential mobility factors, following \cite{MW1} only one possible scaling regime arises.
In particular, we set
\[
q = \frac{p}{p-1}
\]
and, for any function $f: [0,\infty)\times \mathbb{T}_N \rightarrow \mathbb{R},$ define the projections
$\bar f_N: [0,\infty)\times [0,1) \rightarrow \mathbb{R}$ by
\begin{equation}\label{roughscaling}
\bar f_N(t,x) = N^{-q}  f(N^{q+2} t, \alpha)\qquad \text{for}\qquad  N x \in  \left[ \alpha - \frac{1}{2}, \alpha +\frac{1}{2}\right).
\end{equation}
We have scaled the crystal extent by $N.$  Note that the scaling of time and crystal height is different than a standard $4$th order diffusion scaling.   The crystal's height is now scaled at a rate faster than $N,$ and determined by the properties of the underlying potential.  The unusual scaling of time is again determined by the requirement that the limiting equation be meaningful.  This clearly motivates the choice $p=2$, as in this case we see easily that $q=p=2$. However, if $p=1$, we see that this scaling degenerates to $q=\infty$.  Indeed, to properly prove the exponential mobility in the case of the bond counting model from \cite{KDM}, one needs either to use the analysis in \cite{LLMM}
or to reduce the temperature and hence increase $\beta$ with the system size $N$ in a manner that does not require such degeneration of the time scaling.

\begin{remark}
Much of this section is motivated by similar calculations in \cite{MW1}, however we have included the details here to in particular clearly state where the $2$nd order terms arise in the generator and how to introduce the appropriate rates to the KMC process defined in \cite{MW1} without considering such terms.  
\end{remark}

\begin{remark}
Another way to see dynamics for this process is to study it in a formal hydrodynamic limit.  For a configuration $h =(h_i)_{i=1,\ldots N}$, let us define $\hat{h}^{i} = (h_j)_{j=1,\ldots N, j \neq i}$ and $\hat{h}^{i,i+1} = (h_j)_{j=1,\ldots N, j \neq i,i+1}$.  Then, the resulting Fokker-Planck equation in the setting of adatom rates for the crystal fluctuations as well as deposition ($\tau^{-1}$) and evaporation ($\rho$) rates is then
\begin{equation}
\begin{aligned}
  \partial_t \Psi (h, \beta; t) & = \frac{D}{2} \sum_i \Bigl\{   \Psi  (h_i + 1, h_{i+1} - 1, \hat{h}^{i,i+1}, \beta; t) r (\beta, z_i - 2N, z_{i-1} + N)  \\
  &  \hspace{1cm} + \Psi  (h_i - 1, h_{i+1} +1, \hat{h}^{i,i+1}, \beta ; t) r (\beta, z_i + 2 N, z_{i-1} -  N)  \\
  & \hspace{1cm} - 2 \Psi   (h_i , h_{i+1} , \hat{h}^{i,i+1}, \beta; t)  r (\beta, z_i, z_{i-1})    \Bigr\} \\
  &  +  \rho_{evap} \sum_i \Bigl\{ \Psi ( h_i - 1, \hat{h}^i,  \beta ; t  )  r ( \beta,
  z_i +N, z_{i-1} + N )  \\
  &  \qquad \qquad- \Psi   ( h_i , \hat{h}^i ,  \beta; t  )  r ( \beta,
  z_i , z_{i-1}) \Bigr\} \\
  & + \tau_{dep}^{-1} \sum_i \Bigl\{ \Psi  ( h_i + 1, \hat{h}^i,  \beta ; t )  r_{dep} ( \beta,
  z_i - N, z_{i-1} + N )\\
  & \qquad \qquad - \Psi  ( h, \beta ; t )  r_{dep} (\beta, z_i, z_{i-1} ) \Bigr\}.
\end{aligned}
\end{equation}
\end{remark}

 \subsection{Macroscopic Dynamics}

To derive the PDE limits from the generator of the microscopic process, we follow the calculations in Section $4$ and $5$ of \cite{MW1}. For finite $\beta$, we wish to find $\eta$ to shift the mean of the distribution on each window.  In particular, we wish to find $\eta$ such that
\begin{equation}
\label{maxEndist1}
\frac{ \sum_{z \in \mathbb{Z}} e^{ -\beta    V(z)   + \eta z  }     }{ Z_{\eta}}
\end{equation}
to $ \bar{z}$ where we will condition $\bar{z}$ on each window of size $M$.  Here note the slight abuse of notation to use $z$ as its scalar component averaged on a window.

For fixed finite $\beta$, we observe that our process has only conservation law $z$ we have arrived at precisely the optimal twist distribution used in \cite{MW1}.   For $u\in \mathbb{R},$  the surface tension $\sigma_D(u)$ (\cite[Sec. 5]{Funaki}) is defined using the Legendre transformation
\begin{equation}\label{eqn:sigmaD}
\sigma_D(u) =\sup_{\eta\in \mathbb{R} }\left\{ \eta u  - \log Z_{\eta} \right\}
\end{equation}
with
\[
Z_{\eta} =
 {\sum_{z\in \mathbb{Z} } e^{-\beta V(z) +  \eta z }}.
 \]
 Note, for $p=2$, this just becomes
 \[
Z_{\eta} \sim
 {\sum_{z\in \mathbb{Z} } e^{-\beta  \left( z - \frac{\eta}{2 \beta} \right)^2 }}.
 \]
 As a result, we can construct the surface tension through similar arguments. In particular, we let
 \[
u = \left[\nabla_\eta \log Z_\eta \right]( \sigma'_D(u)) =  \frac{\sum_{z\in \mathbb{Z}}  z\, e^{-\beta V(z) +   \nabla \sigma_D z}}
 {\sum_{z\in \mathbb{Z}} e^{-\beta V(z) + \nabla \sigma_D z }}
 \]
i.e., take $\eta= \sigma'(u)$ in the pair $\eta z$ then the mean value of $z$ under the distribution  \[
 \frac{ e^{-\beta V(z) +\eta z}}{Z_{\eta}}
 \]
 is $u$ \cite[Sec. 5]{Funaki}.  
    In other words, the chemical potential $\mu_k$ as in \eqref{chempot} should be seen to converge to $-\pd_x \sigma'(u)=-\sigma''(u)\pd_x u=- \sigma''_D (h_x) h_{xx}$ in \eqref{eqn:smereka_disc} below.  However, due to the scaling in $N$, the PDE limit will require that we characterize the behavior of $N^{-1} \nabla \sigma_D(  N^{q-1} u)$.  More precisely we need to consider the limit  $\kappa^{1-p}\nabla \sigma_D(\kappa u)$ as  $\kappa$ grows very large.   For $p=2$, it is clear that the limit of $\kappa^{-1}\nabla \sigma_D(\kappa u)$
exists and that
\begin{equation}\label{barsigma}
 \lim_{\kappa \rightarrow \infty} \kappa^{-1}\nabla \sigma_D(\kappa u) = 2 \beta u.
\end{equation}

Continuing along, by partitioning $\mathbb{T}$ into small but macroscopic sets, let $\delta = M/N$ with $N^{-1}\ll \delta \ll 1$ and define the sets
\[
S_k = \mathbb{T} \cap  \delta\left[
k , k+1 \right) .
\]
Note, the volume of each (non-empty) set $S_k$ is the same (and equal to $\delta$).  Hence, following the analysis of \cite{MW1}, Section $6.2$, we observe that taking the expectation of the generator with respect to the local Gibbs measure limits to the
\begin{align*}
\varphi^\delta_{N,k}(t) &- \varphi^\delta_{N,k}(0)
   \approx \delta^{-1}  \frac12 \int_0^t \int_{S_k}
 \Delta \left[
e^{ -\beta \text{div}\left[\nabla V(\nabla h(s,x))\right]}  \right]dx ds  \\
& \hspace{0.0cm} +  N^2 C_N  (\rho_{evap}, \tau^{-1}_{dep})  \delta^{-1}  \frac12 \int_0^t \int_{S_k}
 1 -  \left[
e^{ -\beta \text{div}\left[\nabla V(\nabla h(s,x))\right]}  \right]dx ds
    \end{align*}
where here the operators $\Delta, \text{div}$ are the properly rescaled discrete differential operators acting on a lattice of uniform spacing $1/N$.

As the KMC models are inherently discrete, the model that arises once $\rho_{evap}$ and $\tau^{-1}_{dep}$ have been scaled appropriately to balance the time re-scaling is of the discrete form
\begin{eqnarray}
\label{eqn:smereka_disc}
\frac{d h_k}{dt} = \frac{\alpha}2 \left[  e^{\beta \mu_{k-1}} - 2 e^{\beta \mu_{k}} + e^{\beta \mu_{k+1} }\right]+ C  (1 -  e^{\beta \mu_{k}}),
\end{eqnarray}
provided the evaporation and deposition rates, $\rho_{evap}, \tau^{-1}_{dep}$, scale appropriately with $N$.  Note, this scaling makes sense physically, namely that the evaporation and deposition must be slowed as the system size scales up in order for surface fluctuations and epitaxial properties to balance.

For $V(z) = |z|^2$, we have that $\mu_k =  -2( \Delta_N h)_k$.   The discrete system is identical to the form derived by Smereka \cite{Smereka}. The $N \to \infty$ limit can then be seen to be of the form \eqref{pde}.  Namely, for large $N$ and small $\delta,$ $\varphi_{N,k}^\delta(t) \approx h(t,k\delta)$ we obtain
\begin{align*}
  h(t,x) - h(0,x)
  &= \delta^{-1}  (2)^{-1} \int_0^t \int_{S_k}
    \Delta \left[
    e^{ -\beta \text{div}\left[\nabla V(\nabla h(s,x))\right]}  \right] \\
  &\qquad   + C \left[ 1- e^{ -\beta \text{div}\left[\nabla V(\nabla h(s,x))\right]}  \right]   dx ds .
    \end{align*}

Physically, the above approach is motivated by the work of Krug et al \cite{KDM} and Smereka \cite{Smereka} both of whom derived PDE limits using closure equations and moments of the Gibbs measure.  We can take a maximal entropy approach to simplify some of the presentation in a manner that works with the derivation via the generator by clearly predicting what measure to take expectations with respect to in order to capture non-equilibrium dynamics locally.  We will take $p[h,z,E]$ the probability of configuration in terms of slope $z$, and take the entropy $p[h,z] \log p[h,z]$.  Given a distribution $p$, take the functions:
\[
F_z [p] \to \bar{z}, \  F_h [p] \to \bar{h},
\]
the maps from the distribution $p$ to the average height $\bar{h}$, average slope $\bar{z}$ respectively.   Via a consistency condition, must we have that $\zeta = -\nabla \eta$ and hence we wish to condition the non-equilibrium dynamics on selecting the most likely $z$ configuration on each window.

So, the grand canonical ensemble will be of the form
\begin{equation*}
p \sim e^{  \beta \mu_h \cdot \bar{h}  - \beta E_s},
\end{equation*}
where we now need to substitute $p$ as in the master equation.  The local Gibbs measure in particular is of the form
\begin{equation*}
p \sim e^{ -\beta( E_s - \mu h)},
\end{equation*}
where $\mu = \frac{\delta F}{\delta h}$ is the chemical potential such that $\mu$ shifts the mean of the distribution to the most probable state.  Above, we have observed that in the scaling limit resulting in exponential mobility we have
$$\mu = - \partial \partial_{z}   V(z) \sim - \diver (V' (\nabla h)),$$
as the Helmholtz free energy is of the form
\begin{equation*}
F ( z ) \sim  V(z)
\end{equation*}
and we observed that the surface tension term $\nabla \sigma_D (\nabla h) = V' (\nabla h) $ is the shift of the mean in $z$.  Hence, the chemical potential must arise from using summation by parts to move a derivative over onto $h$.

\begin{remark}
 The methodology of window averaging we present here is essentially identical to that of \cite{MW1}, though we have attempted to more clearly connect the methods to previous works as well as to more standard statistical physics conventions for the reader's convenience.  
\end{remark}

The goal of the remaining sections will be to establish some analytical results for a continuum version of the model \eqref{pde}, and in particular to compare and contrast dynamics with the purely $4$th order model.

\section{Global  Weak Solutions Positive almost everywhere}
\label{weak}

In this section we prove the global weak solution to \eqref{pde} by considering another degenerate parabolic equation.
Define
$$u:= e^{-\Delta h}.$$
Then \eqref{pde} can be formally recast as
\begin{equation}\label{PDE}
\partial_t u =-u\Delta[\Delta u+ (1- u)].
\end{equation}
If $u>0$ for all time then \eqref{PDE} and \eqref{pde} are equivalent
to each other rigorously.  We investigate the global weak solution for
\eqref{PDE} in one dimension with periodic boundary condition in
$\I=\mathbb{R}/ \mathbb{Z}$.  The periodic boundary condition is a
natural one for the screw-periodic $h$ as discussed in the previous
section.   As many of the tools we present here
  connect well to literature on $1d$ thin film models, we restrict our
  attention here to one dimensional models to avoid the dimensional restriction for embedding theorem.   

We are going to prove there exists a  global weak solution to
\eqref{PDE}, which
 is positive almost everywhere. In the other words, the set $\{(t,x); u=0\}$, which corresponds to the singular points for $\{(t,x); \Delta h=+\8\}$, has Lebesgue measure zero; see Theorem \ref{global_th} and the proof in Section \ref{sec3.3}. 
The asymptotic behavior of $u(t,x)$ as time goes to infinity will also be proved in Theorem \ref{long2}.

\subsection*{Notations } 
In the following, using standard notations for Sobolev spaces,
we denote
\begin{equation}
H^{k} (\I):=\{u(x)\in H_{\text{loc}}^k(\mathbb{R}); { \,u(x+1)=u(x) \,\,a.e.\,x\in \mathbb{R} } \},
\end{equation}
with standard inner product in $H^k$
 and when $k=0$, we denote it as $L^2 (\I)$.

\subsection{Formal observations and existence Result}
Denote the first functional $F$ as
\begin{equation}
  F(u):= \int_\I u \d x.
\end{equation}
 Denote the second functional $E$ as
\begin{equation}
  E(u):= \int_\I (\pd_{xx} u)^2 + (\pd_x u)^2\d x.
\end{equation}
We first give key observations which inspire us to prove  the regularities and positivity of solutions later. \\
{\bf Observation 1.} We have the following lower order energy dissipation law
\begin{align}
  \frac{\d F(u)}{\d t} & = \int_\I  \partial_t u  \d x = \int_\I -u (\pd_x^4 u-\pd_{xx}u) \d x \\
  & = -\int_\I (\pd_{xx} u)^2 + (\pd_x u)^2 \d x = - E(u) \leq 0 \nonumber.
\end{align}
This also shows the relation between $F$ and $E$, which is the key point to study the asymptotic behavior of  solutions.\\
{\bf Observation 2.} We have the following higher order energy dissipation law
\begin{align}
  \frac{\d E(u)}{\d t} &= 2\int_\I \pd_{xx}u \pd_{xx}\partial_t u + \pd_x u \pd_{xt} u\d x\\
  &=\int_\I 2 (\pdf u -\pd^2_x u)\partial_t u  \d x = -2 \int_\I u (\pdf u-\pd^2_x u)^2 \d x  \leq 0\nonumber
\end{align}
if $u\geq 0$.\\
{\bf Observation 3.} We have the following heuristic estimate to obtain the lower bound of solution $u$.
\begin{align}
  \frac{\d}{\d t} \int_\I \ln u \d x =0,
\end{align}
if $u>0$.

Taking into account Observation 3, although we can prove the measure of $\{(t,x);u(t,x)=0\}$ is zero, we still have no regularity information for this degenerate set. To avoid the difficulty when $u=0$, following the idea of \textsc{Bernis and Friedman} \cite{Friedman1990}, we use a regularized method  to first prove the existence and strict positivity for regularized solution $\uv$ to a properly modified equation below, then take limit $\varepsilon\to 0$. For $0<\alpha<1$, the regularization of \eqref{PDE} we consider is
\begin{equation}\label{equv}
 \left\{
     \begin{array}{ll}
       \dpstyle{\uvt=-\frac{\uv^{1+\alpha}}{\uv^\alpha+ \varepsilon^\alpha}(\pd_x^4 \uv-\pd_x^2 \uv),} & \text{ for }t\in[0,T],\,x\in \I; \\
       \dpstyle{\uv(0,x)=u_0(x)+\varepsilon,} & \text{ for }x\in \I.
     \end{array}
   \right.
\end{equation}
We will show in \eqref{n52}  that $\uv$ has a lower bound $\varepsilon$ for all $t\in[0,T]$; see Section \ref{sec3.3.2}.
Therefore the regularized problem \eqref{equv} is nondegenerate for fixed $\varepsilon$. 
The  existence of the regularized problem is also stated in  \cite{Friedman1990}. We also refer to \cite{GLL2017} for the uniqueness of the solution to a similar regularized problem.
 We point out that the non-degenerate regularized term is important to the positivity of the global weak solution.

Since the lower bound for $\uv$ depends on $\varepsilon$, we can only prove the limit solution $u$ is positive almost everywhere. Therefore, we need to define a set
\begin{equation}\label{PT}
  P_T:=(0,T)\times \I \backslash \{(t,x);u(t,x)=0\},
\end{equation}
which is an open set and we can
define a distribution on $P_T$. From now on, $c$ will be a generic constant whose value may change from line to line.

First we give the definition of weak solution to PDE \eqref{PDE}.
\begin{defn}
  For any $T>0$, we call a non-negative function $u(t,x)$ with regularities
  \begin{equation}\label{723_1}
    u\in \Lty([0,T];\Hper^2(\I)),\quad u(\pd_x^4 u - \pd_x^2 u)\in L^2(P_T),
  \end{equation}
  \begin{equation}\label{723_3}
    \partial_t u \in L^2([0,T];\Lper^2(\I)),\quad u\in C([0,T];\Hper^1(\I)),
  \end{equation}
   a weak solution to PDE \eqref{PDE} with initial data $u(0,x)=u_0(x)$ if

  \begin{enumerate}

    \item for any function $\phi\in C^\infty([0,T]\times\I)$,  $u$ satisfies
    \begin{equation}\label{eq01}
      \int_0^T\int_\I \phi \partial_t u  \ud x \ud t+\int\int_{P_T}\phi u(\pd_x^4 u - \pd_x^2 u)\ud x \ud t =0;
    \end{equation}

    \item the following  first energy-dissipation inequality holds
    \begin{equation}\label{E01}
      E(u(T,\cdot))+\int\int_{P_T}2u(\pd_x^4 u - \pd_x^2 u)^2 \ud x \ud t \leq E(u(0,\cdot)).
    \end{equation}

    \item the following  second energy-dissipation inequality holds
    \begin{equation}\label{F01}
      F(u(T,\cdot))+\int_0^TE(u(t,\cdot)) \ud t \leq F(u(0,\cdot)).
    \end{equation}

  \end{enumerate}

\end{defn}

%

{We now state the main result the global existence of weak solution to \eqref{PDE} as follows.
\begin{theorem}\label{global_th}
  For any $T>0$, assume initial data $u_0\in \Hper^2(\I)$, with
  \[\int_\I \ln(u_0) \ud x =:m_0<+\infty, \ \ u_0\geq0.  \]
   Then there exists a global non-negative weak solution to PDE \eqref{PDE} with initial data $u(0,x)=u_0(x)$. Besides, we have
  \begin{equation}\label{u+}
    u(t,x)>0  \text{  for }\ale (t,x)\in[0,T]\times\I.
  \end{equation}
\end{theorem}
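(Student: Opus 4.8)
The plan is to realize the Bernis--Friedman program already set up around \eqref{equv}: push the three formal identities (Observations~1--3) through to the regularized level, where they are rigorous, and then extract a limit as $\varepsilon\to0$. For fixed $\varepsilon>0$ the mobility $m_\varepsilon(s):=s^{1+\alpha}/(s^\alpha+\varepsilon^\alpha)$ is smooth, and \eqref{equv} is non-degenerate provided $\uv$ stays bounded away from $0$. So the first task is to solve \eqref{equv} on $[0,T]$ (Galerkin approximation as in \cite{Friedman1990}, uniqueness as in \cite{GLL2017}) and to prove the lower bound $\uv\ge\varepsilon$ of \eqref{n52}. Once $\uv\ge\varepsilon$, the coefficient $m_\varepsilon(\uv)$ is bounded below, \eqref{equv} is uniformly parabolic for each fixed $\varepsilon$, and $\uv$ is smooth, legitimizing all the integrations by parts below.

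The second task is a family of bounds uniform in $\varepsilon$. Repeating Observation~2 for \eqref{equv} gives
\[
E(\uv(t))+2\int_0^t\!\!\int_\I m_\varepsilon(\uv)\,(\pd_x^4\uv-\pd_x^2\uv)^2\,\d x\,\d s=E(\uv(0))=E(u_0),
\]
the last equality because adding the constant $\varepsilon$ to the data does not change derivatives; with the mean controlled by Observation~1 ($F(\uv(t))\le F(u_0)+\varepsilon$) this bounds $\uv$ in $\Lty([0,T];\Hper^2(\I))$, using $u_0\in\Hper^2$. Since $m_\varepsilon(\uv)\le\uv\le\|\uv\|_{\Lty}\le C$, writing $\uvt=-m_\varepsilon(\uv)(\pd_x^4\uv-\pd_x^2\uv)$ and $\uvt^2=m_\varepsilon(\uv)\cdot m_\varepsilon(\uv)(\pd_x^4\uv-\pd_x^2\uv)^2$ converts the dissipation bound into a uniform bound on $\uvt$ in $L^2([0,T];\Lper^2(\I))$. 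Finally, the regularized analog of Observation~3 is the conservation law $\tfrac{\d}{\d t}\int_\I G_\varepsilon(\uv)\,\d x=0$ for the entropy $G_\varepsilon$ with $G_\varepsilon'=1/m_\varepsilon$, namely $G_\varepsilon(s)=\ln s-\tfrac{\varepsilon^\alpha}{\alpha}s^{-\alpha}$; since $0\le\tfrac{\varepsilon^\alpha}{\alpha}\uv^{-\alpha}\le\tfrac1\alpha$ on $\{\uv\ge\varepsilon\}$ and $m_0<\infty$, this yields the lower bound $\int_\I\ln\uv(t)\,\d x\ge m_0-\tfrac1\alpha$, uniform in $\varepsilon$ and $t$.

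With these bounds, Aubin--Lions (via $\Hper^2\hookrightarrow\hookrightarrow\Hper^1$ together with the $\uvt$ bound) gives a subsequence with $\uv\to u$ strongly in $C([0,T];\Hper^1(\I))$ and a.e., $\uv\rightharpoonup u$ weak-$*$ in $\Lty([0,T];\Hper^2)$, $\uvt\rightharpoonup\pd_t u$ in $L^2$, and $u\ge0$; the regularities \eqref{723_1}--\eqref{723_3} follow by lower semicontinuity. The positivity \eqref{u+} is the heart of the matter and comes directly from the uniform entropy bound: since $\ln\uv\le\|\uv\|_{\Lty}\le C$, Fatou applied to the nonnegative functions $C-\ln\uv$ gives $\int_\I(C-\ln u)\,\d x\le\liminf_\varepsilon\int_\I(C-\ln\uv)\,\d x\le C|\I|-(m_0-\tfrac1\alpha)<\infty$ for a.e.\ $t$; as the integrand equals $+\infty$ on $\{x:u(t,x)=0\}$, that set must be Lebesgue-null, whence $u>0$ for $\ale(t,x)$. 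This makes $P_T$ in \eqref{PT} an open set of full measure in $(0,T)\times\I$.

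The last and hardest step is to pass to the limit in \eqref{eq01} and to recover \eqref{E01}--\eqref{F01}; I expect this limit-identification to be the main obstacle. The difficulty is the degenerate flux $u(\pd_x^4u-\pd_x^2u)$: the $\Hper^2$ bound only gives $\pd_x^4\uv\rightharpoonup\pd_x^4u$ in the distributional sense, so the product with the vanishing mobility cannot be identified globally. The remedy, as in \cite{Friedman1990}, is to localize inside $P_T$: on a compact $K\subset P_T$ one has $u\ge c>0$, hence $\uv\ge c/2$ and $m_\varepsilon(\uv)\ge c'>0$ for small $\varepsilon$, so the dissipation bound upgrades to a uniform $L^2(K)$ bound on $\pd_x^4\uv-\pd_x^2\uv$; then $m_\varepsilon(\uv)\to u$ uniformly on $K$ (strong) times $\pd_x^4\uv-\pd_x^2\uv\rightharpoonup\pd_x^4u-\pd_x^2u$ (weak) identifies the flux as $u(\pd_x^4u-\pd_x^2u)\in L^2(K)$, matching $-\pd_t u$. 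Testing only against $\phi$ supported where $u>0$ and exhausting $P_T$ by such $K$ yields \eqref{eq01} together with $u(\pd_x^4u-\pd_x^2u)\in L^2(P_T)$ from \eqref{723_1}; the inequalities \eqref{E01}--\eqref{F01} then follow from the regularized equalities by weak lower semicontinuity of $E$ and of the dissipation integrals, completing the construction of a global weak solution to \eqref{PDE}.
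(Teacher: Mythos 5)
Your plan reproduces the paper's architecture (same regularization \eqref{equv}, same three dissipation/entropy structures, Aubin--Lions compactness, identification of the flux on $P_T$), but it has one genuine gap: the strict positivity of the regularized solution, i.e.\ the lower bound \eqref{n52}, is announced as ``the first task'' and then never proved. This bound is not part of the existence theory you cite: Bernis--Friedman give existence for the regularized problem, but the mobility $m_\varepsilon(s)=s^{1+\alpha}/(s^\alpha+\varepsilon^\alpha)$ still degenerates at $s=0$, so positivity has to be established for this particular equation. Everything you do afterwards leans on it: the derivation of the entropy law $\frac{\ud}{\ud t}\int_\I G_\varepsilon(\uv)\ud x=0$ multiplies \eqref{equv} by $1/m_\varepsilon(\uv)$, and the $F_\varepsilon$-dissipation multiplies by $(\uv^\alpha+\veps^\alpha)/\uv^\alpha$; both manipulations are illegitimate where $\uv$ may vanish, so without \eqref{n52} the scheme is circular. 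The missing idea is exactly the paper's Lemma \ref{lem-min} combined with the conservation law \eqref{conser2}: the conservation law yields $\frac{\veps^\alpha}{\alpha}\int_\I \uv^{-\alpha}(t)\ud x\le c(u_0)$ (using $\ln\uv\le\|\uv\|_{\Lty}$ and $\uv(0)\ge\veps$), while Lemma \ref{lem-min} gives $\uv(t,x)\le(\uv)_{\min}(t)+\frac23\|\pd_x^2\uv(t)\|_{L^2}\,|x-x^\star|^{3/2}$; inserting the latter into the former shows that $\int_\I\uv^{-\alpha}\ud x$ blows up as $(\uv)_{\min}\to0$, forcing an $\varepsilon$-dependent positive lower bound on $(\uv)_{\min}$ and closing the a priori assumption $\uv>0$ (the paper's Section \ref{sec3.1}). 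You have both ingredients on the page --- the entropy law and the uniform $H^2$ bound --- but never combine them, and as written the proposal is a program rather than a proof.

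The rest is correct, and two steps are genuinely different from (and arguably cleaner than) the paper. First, you obtain positivity of the limit by Fatou applied to $C-\ln\uv$ together with the uniform entropy bound $\int_\I\ln\uv(t)\ud x\ge m_0-\tfrac1\alpha$, whereas the paper goes through the quantitative measure estimate $\text{meas}\{\uv<\delta\}\le c(u_0)T/(-\ln\delta)$; both work. Second, for the flux identification the paper (Lemma \ref{claim3.5}) splits $[0,T]\times\I$ into $\{u\le\delta\}$ and $\{u>\delta\}$, controls the small-$u$ region by Cauchy--Schwarz against the dissipation plus the measure estimate (the $(-\ln\delta)^{-1/2}$ bound), and finishes with a diagonal argument in $(\delta,\varepsilon)$; you instead use that the flux is exactly $-\uvt$, hence converges weakly in $L^2$ of the whole cylinder to $-\partial_t u$, and identify this limit with $u(\pd_x^4 u-\pd_x^2 u)$ on compact subsets of $P_T$ by strong-times-weak convergence, the complement of $P_T$ being null. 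This avoids the $I_1$ estimate and the diagonalization entirely. Two small fixes: \eqref{eq01} must hold for all $\phi\in C^\infty([0,T]\times\I)$, not only $\phi$ supported in $P_T$ --- the extension is immediate from $-\partial_t u=u(\pd_x^4u-\pd_x^2u)$ a.e.\ on $P_T$ together with $\text{meas}\{u=0\}=0$, but you should say so rather than ``testing only against $\phi$ supported where $u>0$''; and the bound $F(\uv(t))\le F(u_0)+\varepsilon$ should read $F(\uv(t))\le F_\varepsilon(\uv(0))=F(u_0)+\varepsilon+O(\varepsilon^\alpha)$, since it is the perturbed functional $F_\varepsilon$, not $F$, that is monotone along the regularized flow.
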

}

We will use an approximation method to obtain the global existence in Theorem \ref{global_th}. This method is proposed by \cite{Friedman1990} to study a nonlinear degenerate parabolic equation. 

{
\begin{remark}
  The regularized method for studying the 4th order degenerate problem
  is first introduced in \textsc{Bernis and Friedman}
  \cite{Friedman1990}. There are however some technical difficulties
  to overcome in applying this general method to our problem
  \eqref{PDE}. In particular, when taking limit for the regularization
  constant $\varepsilon\to 0$, we need to carefully deal with the set
  $\{(t,x); u\geq 0\}$ by dividing it into several subsets (see Lemma \ref{claim3.5}) and prove
  the Lebesgue measure of $\{(t,x); u= 0\}$ is zero (see Section \ref{sec3.3.2}). 
\end{remark}}

\subsection{Global positive solution to a regularized problem}\label{sec3.1} In this section, we will study key a-priori estimates for the regularized solution $\uv$ and obtain the lower bound of regularized solution $\uv$, which depends on $\varepsilon$.

First we give the definition of weak solution with energy identities to regularized problem \eqref{equv}.
\begin{defn}\label{defnuv}
  For any fixed $\varepsilon>0,\,T>0$, we call a non-negative function $\uv(t,h)$ with regularities
  \begin{equation}
    \uv\in \Lty([0,T];{  \Hper^2}(\I)),\quad \frac{\uv^{1+\alpha}}{\uv^\alpha+ \varepsilon^\alpha}(\pd_x^4 \uv-\pd_x^2\uv)\in L^2(0,T;{ \Lper^2}(\I)),
  \end{equation}
  \begin{equation}
    \uvt\in L^2([0,T];{  \Lper^2}(\I)),\quad \uv\in C([0,T];\Hper^1(\I)),
  \end{equation}
   weak solution to regularized problem \eqref{equv} if
\begin{enumerate}
    \item for any function $\phi\in C^\infty([0,T]\times\I)$, $\uv$ satisfies
    \begin{equation}\label{eqv01}
      \int_0^T\int_\I \phi \uvt \ud x \ud t+\int_0^T\int_\I \phi\frac{\uv^{1+\alpha}}{\uv^\alpha+ \varepsilon^\alpha}(\pd_x^4 \uv-\pd_x^2 \uv) \ud x \ud t =0,
    \end{equation}
    \item the following first energy-dissipation equality holds
    \begin{equation}\label{Ev01}
      E(\uv(T,\cdot))+2\int_0^T\int_\I \frac{\uv^{1+\alpha}}{\uv^\alpha + \veps}(\pd_x^4 \uv-\pd_x^2 \uv)^2 \ud x \ud t =E(\uv(0,\cdot)),
    \end{equation}
    { \item the following second energy-dissipation equality holds
    \begin{equation}\label{Fv01}
      F_\varepsilon(\uv(T,\cdot))+\int_0^TE(\uv(t,\cdot)) \ud t =F_\varepsilon(\uv(0,\cdot)),
    \end{equation}
    where
    $F_\varepsilon(\uv):=\int_\I \left( \frac{\varepsilon^\alpha}{1-\alpha} \frac{1}{u^{\alpha-1}}+ \uv \right) \ud x$
    is a perturbed version of $F$.  }
  \end{enumerate}
\end{defn}

The existence of global positive solution to \eqref{equv} defined above will be proved by collecting the key a priori estimates in Section \ref{sec3.3.1} and validation of the a priori assumption in Section \ref{sec3.3.2}.

First we state the key lemma connecting norm of second derivative to minimum of $u$.
\begin{lemma}\label{lem-min}
  For any function $u$ such that $u\in H^2([0,1]),$ assume that $u$ achieves its minimal value $u_{\min}$ at $x^\star$, i.e. $u_{\min}=u(x^\star)$. Then, we have
  \begin{equation}\label{lem2eq}
     u(x)-u_{\min}  \leq \frac{2}{3} \|u_{xx}\|_{L^2([0,1])}\vert x-x^\star \vert^{\frac{3}{2}}, { \text{ for any }x\in[0,1]}.
  \end{equation}
\end{lemma}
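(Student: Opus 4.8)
The plan is to write $u(x)-u_{\min}$ as a double integral of $u_{xx}$ and then estimate it by Cauchy--Schwarz. The one input I need beyond the computation is that $u'(x^\star)=0$. Since $u\in H^2([0,1])$, the one--dimensional Sobolev embedding gives $u\in C^1$, so that $u'$ is continuous and the pointwise statement makes sense; and because the functions we apply this to are periodic on $\I=\mathbb{R}/\mathbb{Z}$, the global minimizer $x^\star$ is an interior critical point and hence $u'(x^\star)=0$. This vanishing is essential: for $u(x)=x$ on $[0,1]$ one has $\|u_{xx}\|_{L^2([0,1])}=0$ while $u(x)-u_{\min}=x>0$, so the inequality can hold only once the linear term at $x^\star$ is suppressed.

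First I would fix $x$ with, say, $x>x^\star$ (the case $x<x^\star$ being identical after reflection) and use $u'(x^\star)=0$ to write
\begin{equation*}
  u'(s)=\int_{x^\star}^s u_{xx}(\tau)\ud\tau,\qquad u(x)-u_{\min}=\int_{x^\star}^x u'(s)\ud s.
\end{equation*}
Bounding the inner integral by Cauchy--Schwarz and enlarging the domain of integration to all of $[0,1]$ gives
\begin{equation*}
  |u'(s)|\le (s-x^\star)^{1/2}\,\|u_{xx}\|_{L^2([x^\star,s])}\le (s-x^\star)^{1/2}\,\|u_{xx}\|_{L^2([0,1])},
\end{equation*}
and integrating this in $s$ over $[x^\star,x]$ produces the factor $\tfrac{2}{3}(x-x^\star)^{3/2}$:
\begin{equation*}
  u(x)-u_{\min}\le \|u_{xx}\|_{L^2([0,1])}\int_{x^\star}^x (s-x^\star)^{1/2}\ud s=\frac{2}{3}\,\|u_{xx}\|_{L^2([0,1])}\,(x-x^\star)^{3/2},
\end{equation*}
which is exactly \eqref{lem2eq}.

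The computation itself is routine, so I do not expect a genuine obstacle; the only point demanding care is the vanishing of $u'$ at the minimizer, which is where the periodic structure (equivalently, interiority of the minimum) is silently used. I would make this hypothesis explicit, since for a general non--periodic $u\in H^2([0,1])$ whose minimum sits at an endpoint the inequality as stated would be false.
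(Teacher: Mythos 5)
Your proof is correct and takes essentially the same route as the paper's: both write $u_x(s)=\int_{x^\star}^s u_{xx}$, bound it by Cauchy--Schwarz to get the $|s-x^\star|^{1/2}\|u_{xx}\|_{L^2([0,1])}$ factor, and integrate once more to produce $\tfrac{2}{3}|x-x^\star|^{3/2}$. Your cautionary remark is also well taken: the paper's proof deduces $u_x(x^\star)=0$ directly from minimality without noting that this requires $x^\star$ to be an interior critical point (or $u$ to be periodic), so the lemma as literally stated for arbitrary $u\in H^2([0,1])$ fails for $u(x)=x$; in the paper's application the functions live on $\mathbb{T}=\mathbb{R}/\mathbb{Z}$, where the minimum of the periodic extension is always interior, so the argument is sound there.
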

\begin{proof}
  Since $u_{xx}\in L^2([0,1]),$ $u_x$ is continuous. Hence by $u_{\min}=u(x^\star)$, we have $u_x (x^\star)=0$ and
  \begin{equation}
    \vert u_x(x)\vert=\vert \int_{x^\star}^x u_{xx}(s) \ud s \vert \leq \vert x-x^\star \vert^{\frac{1}{2}} \|u_{xx}\|_{L^2([0,1])}, \text{ for any }x\in[0,1].
  \end{equation}
  Hence we have
  \begin{align*}
    \vert u(x)-u_{\min}\vert&\leq \left|\int_{x^{\star}}^x \vert s-x^\star \vert^{\frac{1}{2}} \|u_{xx}\|_{L^2([0,1])} \ud s \right| \\
    &\leq \frac{2}{3}\vert x-x^\star \vert^{\frac{3}{2}} \|u_{xx}\|_{L^2([0,1])}.
  \end{align*}
\end{proof}

\subsubsection{A-priori estimates and energy identities under a-priori assumption $\uv>0$. }\label{sec3.3.1} In this section we will prove the lower order and higher order a priori estimates under a-priori assumption $\uv>0$. The a-priori assumption will be verified in Section \ref{sec3.3.2}.

Step 1. Higher order estimate.
Multiplying \eqref{equv} by $\pd_x^4 \uv-\pd_x^2 \uv$ gives
\begin{align}\label{tm34}
  \frac{1}{2}\frac{\ud}{\ud t}\int_\I (\pd_{x}^2 \uv)^2 & + (\pd_x \uv)^2 \ud x= \int_\I (\pd_x^4 \uv-\pd_x^2 \uv) \uvt \ud x \\
  &  = -\int_\I \frac{\uv^{1+\alpha}}{\uv^\alpha+ \varepsilon^\alpha}(\pd_x^4 \uv-\pd_x^2 \uv)^2\ud x\leq 0.  \notag
\end{align}
This gives
\begin{equation}\label{n41}
2E(\uv)+ \int_0^T \int_\I \frac{\uv^{1+\alpha}}{\uv^\alpha+ \varepsilon^\alpha}(\pd_x^4 \uv-\pd_x^2 \uv)^2\ud x\leq 2E(\uv(0)). 
\end{equation}
Thus we obtain, for any $T>0$,
\begin{equation}\label{high}
  \|(\uv)_{xx}\|_{\Lty([0,T];L^2(\I))}\leq  \sqrt{2}E_0^{\frac{1}{2}},
\end{equation}
where $E_0:= E(u_0).$

Step 2. Lower order estimate.  We require the \textit{a-priori} assumption $\uv>0$.
Multiplying \eqref{equv} by $\frac{\uv^{\alpha}+ \varepsilon^{\alpha}}{\uv^{\alpha}}$, we have
\begin{equation}\label{add18_1}
\begin{aligned}
&\frac{\ud}{\ud t}\int_\I \frac{\varepsilon^\alpha}{1-\alpha} \frac{1}{\uv^{\alpha-1}}+ \uv\ud x\\
=&\int_\I -\uv (\pd_x^4 \uv-\pd_x^2 \uv)  \ud x=\int_\I -(\pd_{x}^2 \uv)^2-(\pd_x \uv)^2 \ud x=-E(\uv)\leq 0,
\end{aligned}
\end{equation}
which implies for $\alpha<1$
\begin{align*}
&\int_\I \uv\ud x  \leq
 \int_\I \uv(0)\ud x +c \frac{\veps^\alpha}{\veps^{\alpha-1}}\leq c(u_0), \text{ for any }t\in[0,T],
\end{align*}
where we used $u(0)\geq 0$ and thus $\uv(0)\geq \veps$.
Here and in the remaining of this section, $c(u_0)$ will be a positive constant depending only on $u_0$.

Hence
from
$$(\uv)_{\min}(t)\leq \int_\I \uv\ud x  \leq c(u_0) \text{ for any }t\in[0,T] $$
and Lemma \ref{lem-min}
 we have
\begin{align}\label{supnorm}
 \|\uv\|_{L^\8(0,T;L^\8(\I))}\leq c E_0^{\frac{1}{2}} + c(u_0).
\end{align}

Combining Step 1 and Step 2, we have
\begin{equation}\label{es1}
  \|\uv\|_{\Lty([0,T];H_{per}^2[\I])}\leq  C(u_0).
\end{equation}
Moreover, from \eqref{tm34}, we also have
\begin{equation}
  \frac{1}{2}\frac{\ud}{\ud t}\int_\I (\pd_{x}^2 \uv+ \pd_x \uv)^2 \ud x= -\int_\I \frac{\uv^\alpha+ \varepsilon^\alpha}{\uv^{1+\alpha}} \uvt^2\ud x .
\end{equation}
This, together with $ \|\uv\|_{L^\8(0,T;L^\8(0,1))}\leq c E_0 + c(u_0)$, also gives uniform bound of $\pd_t \uv$
\begin{equation}
\frac{1}{c E_0 + c(u_0)} \int_0^T \int_\I  \pd_t \uv^2 \ud x \ud t \leq \frac12 E_0.
\end{equation}
Thus we have
\begin{equation}\label{es2}
\uvt \in L^2(0,T;L^2(\I)).
\end{equation}
By \cite[Theorem 4, p.~288]{Evans1998} whose proof fits also for periodic function,  we also know
\begin{equation}
\uv\in C([0,T];H^1(\I))\hookrightarrow C([0,T]\times\I).\nn
\end{equation}

  The two energy dissipation identities in Definition \ref{defnuv} follow from \eqref{tm34} and \eqref{add18_1} directly.

\subsubsection{Verify the a-priori assumption}\label{sec3.3.2}In this section, we verify the a-priori assumption $\uv>0$ by proving the lower bound of $\uv$.

Multiplying $-\frac{\uv^\alpha+ \varepsilon^\alpha}{\uv^{1+\alpha}} $ to \eqref{equv}, we obtain  the conservation law
\begin{equation}\label{conser2}
\frac{\ud}{\ud t} \int_\I \big( \frac{\veps^\alpha}{\alpha}\frac{1}{\uv^{\alpha}}-\ln \uv \big) \ud x=0.
\end{equation}
Therefore due to \eqref{supnorm},
\begin{equation}\label{49}
\frac{\veps^\alpha}{\alpha} \int_\I \frac{1}{\uv^\alpha} \ud x \leq \frac{\veps^\alpha}{\alpha} \int_\I \frac{1}{\uv^\alpha(0)} \ud x-m_0 + \int_\I \ln \uv \ud x \leq c(u_0),
\end{equation}
where we used $\int_\I \ln u_0 \ud x = m_0$ and $\uv(0)\geq \veps^{\frac{1}{\alpha}}.$
Assume for any $t\in[0,T]$, $\uv(t,\cdot)$ achieves its minimum $(\uv)_{\min}(t)$ at some point.
Notice from Lemma \ref{lem-min},
\begin{equation}
\frac23\|\pd_x^2\uv(t)\|_{L^2}|x-x^*|^{3/2}+ (\uv)_{\min}(t) \geq \uv(t) \text{ for all }t \in [0,T].
\end{equation}
Hence from  \eqref{49}  we have
\begin{align*}
{c(u_0)} \geq &~ \frac{\veps^\alpha}{\alpha} \int_\I \frac{1}{\big[ \frac23\|\pd_x^2\uv(t)\|_{L^2}|x-x^*|^{3/2}+ (\uv)_{\min}(t)\big]^{\alpha}} \ud x\\
{( \text{ from } \eqref{high})}\atop {\geq} & \frac{\veps^\alpha c}{\alpha}  \int_0^{1/2} \frac{1}{\big[ \sqrt{E_0}|x|^{3/2}+ (\uv)_{\min}(t)\big]^{\alpha}} \ud x\\
\geq & ~\frac{1}{(\uv)_{\min}^{\alpha-\frac23}}  \frac{\veps^\alpha c}{\alpha} \int_0^{\frac{1}{2(\uv)_{\min}^{\frac23}}} \frac{1}{\big[ \sqrt{E_0}|y|^{3/2}+ 1\big]^{\alpha}} \ud y\\
\geq & ~\frac{c \veps^\alpha}{\alpha} \frac{1}{(\uv)_{\min}^{\alpha}}.
\end{align*}
Now since $0<\alpha<1$, standard calculus shows that
\begin{equation}\label{n52}
(\uv)_{\min}(t)\geq c(u_0)\veps \,\text{ for all }t\in[0,T].
\end{equation}

\subsection{Global solution to original equation}\label{sec3.3}
This section is devoted to obtaining the global solution to original equation by taking the limit in the regularized problem \eqref{equv}. The proof of Theorem \ref{global_th} will be the collections of the following subsections.

\subsubsection{Convergence of $\uv$ when taking limit $\veps\to 0$}
Assume $\uv$ is the weak solution to \eqref{equv} whose existence is stated by \cite{Friedman1990} after collecting the key a priori estimates in Section \ref{sec3.3.1} and validation of the a priori assumption in Section \ref{sec3.3.2}.
From \eqref{es1} and \eqref{es2},  as $\varepsilon\rightarrow 0,$  we can use Lions-Aubin's compactness lemma for $\uv$ to show that there exist a subsequence of $\uv$ (still denoted by $\uv$) and $u$ such that
\begin{equation}\label{strong}
  \uv\rightarrow u, \text{ in }\Lty([0,T];\Hper^1(\I)),
\end{equation}
which gives
\begin{equation}\label{strong1}
  \uv \rightarrow u, \quad \ale (t,x)\in[0,T]\times \I.
\end{equation}
Again from \eqref{es1} and \eqref{es2}, we have
\begin{equation}\label{weak1}
  \uv{\stackrel{\star}{ \rightharpoonup}} u \quad \text{in }\Lty([0,T];\Hper^2(\I)),
\end{equation}
and
\begin{equation}\label{weak2}
  \uvt \rightharpoonup \partial_t u  \quad \text{in }L^2([0,T];\Lper^2(\I)),
\end{equation}
which imply that
\begin{equation}\label{weak3}
  u\in \Lty([0,T];\Hper^2(\I)),\quad  \partial_t u  \in L^2([0,T];\Lper^2(\I)).
\end{equation}
In fact, by \cite[Theorem 4, p.~288]{Evans1998} whose proof fits also for periodic function, we  know
\begin{equation}
u\in C([0,T];\Hper^1(\I))\hookrightarrow C([0,T]\times \I).\nn
\end{equation}

\subsubsection{Estimate for the measure of $\{(t,x);u=0\}$}\label{sec3.3.2}
Now we use the conservation law for $\uv$ to estimate the measure  $\text{meas}\{(t,x);u=0\}$.

From \eqref{conser2} we have for any $\delta>0,$
\begin{equation}
\text{meas}\{(t,x);\uv<\delta\}(-\ln \delta) \leq \int_0^T \int_\I -\ln\uv \ud x \ud t \leq c(u_0) T,
\end{equation}
which implies
\begin{equation}\label{mes}
\text{meas}\{(t,x);\uv<\delta\} \leq \frac{c(u_0) T}{-\ln \delta}.
\end{equation}
Therefore by \eqref{strong1}, we have
\begin{equation}
\text{meas}\{(t,x);u=0\}=\lim_{n\to \8}\text{meas}\{ (t,x); \uv<\frac{1}{n} \}=\lim_{n\to \8}\frac{c(u_0)T}{\ln n}=0.
\end{equation}

\subsubsection{Proof of Theorem \ref{global_th} by taking limit $\veps\to 0$ in \eqref{weak2}}
 Recall $\uv$ is a weak solution of \eqref{equv} satisfying \eqref{eqv01}. We want to pass to the limit for $\uv$ in \eqref{eqv01} as $\varepsilon \rightarrow 0.$ From \eqref{weak2}, the first term in \eqref{eqv01} becomes
\begin{equation}
  \int_0^T\int_\I \phi \uvt \ud x \ud t \rightarrow  \int_0^T\int_\I \phi \partial_t u  \ud x \ud t .
\end{equation}
The limit of the second term in \eqref{eqv01} is given by the following lemma. With the lemma below, one can take limit in \eqref{eqv01} and obtain \eqref{eq01}. The regularity \eqref{723_1} follows from \eqref{weak3} and \eqref{tmbd} in the proof of Lemma \ref{claim3.5}.

\begin{lemma}\label{claim3.5}
  For $P_T$ defined in \eqref{PT} and any function $\phi\in C^\infty([0,T]\times\I),$ we have
  \begin{equation}\label{claim}
     \int_0^T\int_\I \phi \frac{\uv^{1+\alpha}}{\uv^\alpha+ \varepsilon^\alpha}(\pd_x^4 \uv- \pd_x^2 \uv )\ud x \ud t \rightarrow  \int\int_{P_T} \phi u (\pd_x^4 u- \pd_x^2 u) \ud x \ud t,
  \end{equation}
  as $\varepsilon\rightarrow 0.$
\end{lemma}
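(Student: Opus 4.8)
The plan is to split $(0,T)\times\I$ according to the size of the limiting profile $u$ and to exploit that the regularized mobility is uniformly nondegenerate away from $\{u=0\}$. Write $g_\veps:=\frac{\uv^{1+\alpha}}{\uv^\alpha+\veps^\alpha}(\pd_x^4\uv-\pd_x^2\uv)$ and $a_\veps:=\frac{\uv^{1+\alpha}}{\uv^\alpha+\veps^\alpha}=\frac{\uv}{1+(\veps/\uv)^\alpha}$. From \eqref{strong} and the embedding $\Hper^1(\I)\hookrightarrow C(\I)$ I first record that $\uv\to u$ uniformly on $[0,T]\times\I$; hence on any compact $K\subset P_T$, where $u\ge c_K>0$, one has $\uv\ge c_K/2$ for all small $\veps$, so $a_\veps\to u$ uniformly on $K$ and $a_\veps$ is bounded below there by a positive constant independent of $\veps$. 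The uniform weighted bound \eqref{n41} then forces $\|\pd_x^4\uv-\pd_x^2\uv\|_{L^2(K)}$, and hence $\|\pd_x^4\uv\|_{L^2(K)}$ via \eqref{high}, to be bounded independently of $\veps$.

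With these bounds in hand I would identify the limit locally. Up to a subsequence $\pd_x^4\uv\rightharpoonup\xi$ weakly in $L^2(K)$; since $\uv\stackrel{\star}{\rightharpoonup}u$ in $\Lty([0,T];\Hper^2(\I))$ gives $\pd_x^4\uv\to\pd_x^4 u$ in $\mathcal{D}'$, the weak limit must be $\xi=\pd_x^4 u$, so in particular $\pd_x^4 u\in L^2_{\loc}(P_T)$. Writing $\sqrt{a_\veps}(\pd_x^4\uv-\pd_x^2\uv)\rightharpoonup\sqrt{u}(\pd_x^4 u-\pd_x^2 u)$ in $L^2(K)$ (a uniformly convergent factor times a weakly convergent one) and using weak lower semicontinuity together with \eqref{n41} yields, after exhausting $P_T$ by such $K$, the key bound
\begin{equation}\label{tmbd}
\int\int_{P_T} u\,(\pd_x^4 u-\pd_x^2 u)^2\,\d x\,\d t \le 2E_0.
\end{equation}
Since $u\in\Lty$, this upgrades to $u(\pd_x^4 u-\pd_x^2 u)\in L^2(P_T)$, which is exactly the regularity \eqref{723_1}.

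For the convergence itself, fix $\eta>0$ and decompose the integral over $P_T$ (which equals the integral over all of $(0,T)\times\I$, as $\{u=0\}$ is null by Section~\ref{sec3.3.2}) into $\{u>\eta\}$ and $\{0<u\le\eta\}$. On the open set $\{u>\eta\}$ the coefficient $\phi a_\veps\to\phi u$ strongly in $L^2$ while $\pd_x^4\uv-\pd_x^2\uv\rightharpoonup\pd_x^4 u-\pd_x^2 u$ weakly in $L^2(\{u>\eta\})$ as above, so the product passes to the limit and $\int_{\{u>\eta\}}\phi\, g_\veps\to\int_{\{u>\eta\}}\phi\, u(\pd_x^4 u-\pd_x^2 u)$. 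On the thin region I would estimate, by Cauchy--Schwarz,
\begin{equation}
\Big|\int_{\{0<u\le\eta\}}\phi\, g_\veps\,\d x\,\d t\Big| \le \|\phi\|_{\Lty}\Big(\int_0^T\int_\I a_\veps(\pd_x^4\uv-\pd_x^2\uv)^2\Big)^{1/2}\Big(\int_{\{u\le\eta\}} a_\veps\Big)^{1/2};
\end{equation}
the first factor is $\le\sqrt{2E_0}$ by \eqref{n41}, and since $a_\veps\le\uv\to u$ the second factor satisfies $\limsup_\veps\int_{\{u\le\eta\}}a_\veps\le \eta T$, so the whole expression is $\le C\sqrt{\eta}$ in the limit $\veps\to0$. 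The corresponding limit term obeys $|\int_{\{0<u\le\eta\}}\phi\, u(\pd_x^4 u-\pd_x^2 u)|\le C\sqrt{\eta}$ by \eqref{tmbd}. Sending first $\veps\to0$ and then $\eta\to0$ closes the argument, and because the limit identified on each $\{u>\eta\}$ is independent of the subsequence, the full sequence converges.

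The main obstacle is exactly the passage to the limit in the product $a_\veps(\pd_x^4\uv-\pd_x^2\uv)$: neither factor converges strongly on all of $P_T$, and $\pd_x^4\uv$ carries no $\veps$-uniform $L^2$ bound near $\{u=0\}$, where the mobility degenerates. The localization to $\{u>\eta\}$, where strong-times-weak convergence is legitimate, combined with the uniform $O(\sqrt{\eta})$ smallness near the degenerate set --- available only because of the weighted dissipation estimate \eqref{n41} and the design choice $a_\veps\le\uv$ with $a_\veps$ nondegenerate for fixed $\veps$ --- is the crux of the proof.
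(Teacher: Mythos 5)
Your proof is correct, and its skeleton matches the paper's: both decompose the domain according to the size of $u$, use the uniform nondegeneracy of the mobility on the good set $\{u>\eta\}$ together with the dissipation bound \eqref{n41} to obtain an $\varepsilon$-uniform $L^2$ bound on $\pd_x^4 \uv-\pd_x^2\uv$ there, identify the weak limit distributionally, and pass to the limit by strong-times-weak convergence. The genuine difference is the treatment of the degenerate region. The paper bounds the contribution of $\{u\le\delta\}$ by Cauchy--Schwarz against the factor $\big(\text{meas}\{\uv\le 3\delta/2\}\big)^{1/2}$ and then invokes the entropy-based measure estimate \eqref{mes} (a consequence of the conservation law \eqref{conser2} and the hypothesis $\int_\I \ln u_0 \,\ud x>-\infty$), yielding the rate $(-\ln\delta)^{-1/2}$ and requiring a diagonal argument in $(\delta_\ell,\varepsilon_{\ell\ell})$ to conclude. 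You instead exploit the pointwise inequality $\frac{\uv^{1+\alpha}}{\uv^\alpha+\varepsilon^\alpha}\le \uv$ together with the uniform convergence $\uv\to u$, so that on $\{u\le\eta\}$ the mobility itself is $O(\eta)$; this gives the cleaner rate $C\sqrt{\eta T}$ with no reference to \eqref{mes}, and your limsup-in-$\varepsilon$-then-$\eta\to 0$ structure avoids the diagonalization (the nullity of $\{u=0\}$ that you cite is not even essential for your argument, since that set could be absorbed into $\{u\le\eta\}$). What each approach buys: yours is more elementary and decouples this convergence lemma from the entropy assumption, which would matter if one wanted the lemma under weaker hypotheses on $u_0$; the paper's route reuses the measure estimate it has already established, which quantifies the smallness of the degenerate set itself rather than of the integrand --- the piece of information that ultimately underlies the almost-everywhere positivity claim \eqref{u+} in Theorem \ref{global_th}.
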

\begin{proof}
First, for any fixed $0\leq \delta <1$ small enough, from \eqref{strong}, we know there exist a constant $K_1>0$ large enough and a subsequence $\uvk$  such that
\begin{equation}\label{del1}
  \|\uvk-u\|_{\Lty([0,T]\times \I)}\leq \frac{\delta}{2},\text{  for }k>K_1.
\end{equation}
Denote
\begin{align}
  & \Dld:=\{x\in [0,1];\,0\leq u(t,x)\leq \delta\}, \nn \\
  & \Dgd:=\{x\in [0,1];\,u(t,x)> \delta\}. \nn
\end{align}
The left-hand-side of \eqref{claim} becomes
\begin{equation}\label{n64}
\begin{aligned}
  I:&=\int_0^T\int_\I \phi \frac{\uvk^{1+\alpha}}{\uvk^\alpha+ \varepsilon_k^\alpha}(\pd_x^4 \uvk- \pd_x^2 \uvk ) \ud x \ud t\\
  =&\int_0^T\int_{\Dld} \phi \frac{\uvk^{1+\alpha}}{\uvk^\alpha+ \varepsilon_k^\alpha}(\pd_x^4 \uvk- \pd_x^2 \uvk ) \ud x \ud t \\
  & \hspace{1cm} +\int_0^T\int_{\Dgd} \phi \frac{\uvk^{1+\alpha}}{\uvk^\alpha+ \varepsilon_k^\alpha}(\pd_x^4 \uvk- \pd_x^2 \uvk ) \ud x \ud t\\
  =:& I_1+I_2.
\end{aligned}
\end{equation}
Then we estimate $I_1$ and $I_2$ separately.

For $I_1$, from \eqref{del1}, we have
\begin{equation}
  \vert \uvk(t,x) \vert\leq \frac{3\delta}{2}, \text{ for }t\in[0,T],\,x\in \Dld.
\end{equation}
Hence by H\"older's inequality, we know
\begin{align}\label{11I1}
 I_1\leq& \Big[\int_0^T\int_{\Dld} \Big(\phi \sqrt{\frac{\uvk^{1+\alpha}}{\uvk^\alpha+ \varepsilon_k^\alpha}} \Big)^2 \ud x \ud t \Big]^{\frac{1}{2}}\\
 &\cdot \Big[\int_0^T\int_{\Dld} \Big( \sqrt{\frac{\uvk^{1+\alpha}}{\uvk^\alpha+ \varepsilon_k^\alpha}}(\pd_x^4 \uvk- \pd_x^2 \uvk ) \Big)^2 \ud x \ud t \Big]^{\frac{1}{2}}\nn\\
 \leq& { C(\|u_0\|_{H^2})\|\phi\|_{L^\infty([0,T]\times\I)} \Big(\text{meas}\big\{(t,x);\,\vert\uvk\vert\leq\frac{3\delta}{2}\big\}\Big)^{\frac{1}{2}} }\nn\\
  \leq&  C(\|u_0\|_{H^2})T^{\frac{1}{2}}\frac{1}{(-\ln\delta)^{\frac12}}.\nn
\end{align}
Here we used \eqref{Ev01} and \eqref{supnorm} in the second inequality and \eqref{mes} in the last inequality.

Now we turn to estimate $I_2$.
Denote
\begin{equation}\label{Bd}
B_\delta:=\bigcup_{t\in[0,T]} \{t\}\times\Dgd.
\end{equation}
From \eqref{del1}, we know
\begin{equation}
  \uvk{ (t,x)}>\frac{\delta}{2}, {\text{  for }(t,x)\in B_{\delta}.}
\end{equation}
This, together with \eqref{n41}, shows that
\begin{equation}\label{temp711}
\begin{aligned}
  &\frac{\big(\frac{\delta}{2}\big)^{1+\alpha}}{\vark^\alpha+\big(\frac{\delta}{2}\big)^{\alpha}}\int\int_{B_\delta}(\pd_x^4 \uvk- \pd_x^2 \uvk )^2 \ud x \ud t \\
  \leq&  \int_0^T\int_\I {\frac{\uvk^{1+\alpha}}{\uvk^\alpha+ \varepsilon_k^\alpha}}(\pd_x^4 \uvk- \pd_x^2 \uvk )^2 \ud x \ud t\leq C(\|u_0\|_{\Hper^2(\I)}).
  \end{aligned}
\end{equation}
From \eqref{temp711}, there exists a subsequence of $\uvk$ (still denote as $\uvk$) and $w\in L^2(B_{\delta}) $ such that
\begin{equation}\label{tmbd}
\pd_x^4 \uvk- \pd_x^2 \uvk  \rightharpoonup w , \text{ in }L^2(B_{\delta}).
\end{equation}
Due to \eqref{strong1}, we know $w=\pd_x^4 u- \pd_x^2 u$.

On the other hand from \eqref{strong1}, we have
\begin{equation}
\left| \frac{\uvk^{1+\alpha}}{\uvk^\alpha+ \varepsilon_k^\alpha} - \frac{u^{1+\alpha}}{ u^\alpha+ \varepsilon_k^\alpha} \right| \leq (1+\alpha) |u-\uvk|\to 0.
\end{equation}
Then from $\frac{u^{1+\alpha}}{u^\alpha+ \varepsilon_k^\alpha} \to u$ as $\varepsilon_k \to 0$, we have
\begin{equation}
\left| \frac{\uvk^{1+\alpha}}{\uvk^\alpha+ \varepsilon_k^\alpha}-u \right|\leq \left| \frac{\uvk^{1+\alpha}}{\uvk^\alpha+ \varepsilon_k^\alpha} - \frac{u^{1+\alpha}}{ u^\alpha+ \varepsilon_k^\alpha} \right| + \left|\frac{u^{1+\alpha}}{u^\alpha+ \varepsilon_k^\alpha} -u \right| \to 0.
\end{equation}
This, together with \eqref{tmbd}, we have for $B_\delta$ defined in \eqref{Bd},
\begin{equation}\label{11I2}
  I_2=\int\int_{B_{\delta}}\phi {\frac{\uvk^{1+\alpha}}{\uvk^\alpha+ \varepsilon_k^\alpha}}(\pd_x^4 \uvk- \pd_x^2 \uvk )\ud x \ud t \rightarrow \int\int_{B_{\delta}}\phi u (\pd_x^4 u- \pd_x^2 u) \ud x \ud t.
\end{equation}
This shows there exists $K_2>K_1$ large enough such that for $k>K_2,$
\begin{equation}\label{n-I2}
\left| I_2 - \int\int_{B_{\delta}}\phi u (\pd_x^4 u- \pd_x^2 u) \ud x \ud t  \right|\leq \frac{1}{(-\ln\delta)^{\frac12}} .
\end{equation}
Recall $B_\delta$ defined in \eqref{Bd} and $I_1, I_2$ defined in \eqref{n64}. Combining \eqref{11I1} and \eqref{n-I2}, we know  for $k>K_2,$
\begin{equation}
\begin{aligned}
&\Big\vert \int_0^T\int_\I \phi {\frac{\uvk^{1+\alpha}}{\uvk^\alpha+ \varepsilon_k^\alpha}}(\pd_x^4 \uvk- \pd_x^2 \uvk )\ud x \ud t-\int\int_{B_\delta} \phi u (\pd_x^4 u- \pd_x^2 u) \ud x \ud t \Big\vert\\
= & \left| I_1 + I_2 - \int\int_{B_\delta} \phi u (\pd_x^4 u- \pd_x^2 u) \ud x \ud t   \right| \\
\leq& { \left[C(\|u_0\|_{H^2})T^{\frac{1}{2}}+ 1\right]\frac{1}{(-\ln\delta)^{\frac12}} },
\end{aligned}
\end{equation}
which implies that
\begin{align*}
& \lim_{\delta\rightarrow 0^+}\lim_{k\rightarrow\infty}\Big[ \int_0^T\int_\I \phi {\frac{\uvk^{1+\alpha}}{\uvk^\alpha+ \varepsilon_k^\alpha}}(\pd_x^4 \uvk- \pd_x^2 \uvk )\ud x \ud t \\
& \hspace{3cm} -\int\int_{B_\delta} \phi u (\pd_x^4 u- \pd_x^2 u) \ud x \ud t \Big]=0.
\end{align*}
{For any $\ell\geq 1$, assume the sequence $\delta_\ell\rightarrow 0$. Thus we can choose a sequence $\varepsilon_{\ell k}\rightarrow 0.$ Then by the diagonal argument, we have
$$\delta_\ell\rightarrow 0,\quad \varepsilon_{\ell\ell}\rightarrow 0,$$
as $\ell$ tends to $+\infty$.}
Notice $$P_T=\bigcup_{\delta>0}B_{\delta}.$$
We have
{  \begin{align*}
&\lim_{\ell\rightarrow\infty} \int_0^T\int_\I \phi \frac{u_{\varepsilon_{\ell\ell}}^{1+\alpha}}{u_{\varepsilon_{\ell\ell}}^\alpha+ \varepsilon_{\ell\ell}^\alpha}(\pd_x^4 u_{\varepsilon_{\ell\ell}}- \pd_x^2 u_{\varepsilon_{\ell\ell}}) \ud x\ud t  \\
=&\lim_{\ell\rightarrow\infty}\int\int_{B_{\delta_\ell}} \phi u (\pd_x^4 u- \pd_x^2 u) \ud x \ud t\\
=&\int\int_{P_T} \phi u (\pd_x^4 u- \pd_x^2 u) \ud x\ud t,
\end{align*}
}
which completes the proof.
\end{proof}

\subsubsection{Proof of energy dissipation laws in Theorem \ref{global_th} by taking the limit in the energy identities \eqref{Ev01} and in \eqref{Fv01}}
To take the limit in \eqref{Ev01}, we need a similar lemma whose proof is exactly same as Lemma \ref{claim3.5}.
\begin{lemma}\label{claim3.5-n}
  For $P_T$ defined in \eqref{PT}, { for any function $\phi\in C^\infty([0,T]\times\I),$} we have
  \begin{equation}\label{claim-n}
     \int_0^T\int_\I \phi \frac{\uv^{\frac12+\alpha}}{\uv^\alpha+ \varepsilon^\alpha}(\pd_x^4 \uv- \pd_x^2 \uv )\ud x \ud t \rightarrow  \int\int_{P_T} \phi u^{\frac12} (\pd_x^4 u- \pd_x^2 u) \ud x \ud t,
  \end{equation}
  as $\varepsilon\rightarrow 0.$
\end{lemma}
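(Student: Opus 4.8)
The plan is to follow the proof of Lemma~\ref{claim3.5} essentially verbatim, the only change being the replacement of the exponent $1+\alpha$ by $\tfrac12+\alpha$ in the numerator of the mobility factor. As before, I would fix a small $\delta\in[0,1)$, invoke the strong convergence \eqref{strong} to pick $K_1$ with $\|\uvk-u\|_{\Lty([0,T]\times\I)}\le\delta/2$ for $k>K_1$, split $\I$ into $\Dld=\{0\le u\le\delta\}$ and $\Dgd=\{u>\delta\}$, and write the left-hand side of \eqref{claim-n} as $I_1+I_2$, with $I_1$ the integral over $\Dld$ and $I_2$ the integral over $\Dgd$ (equivalently over $B_\delta$ from \eqref{Bd}).

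For $I_1$ I would again use H\"older's inequality, factoring the integrand so as to expose the energy-dissipation quantity controlled by \eqref{n41}. Writing
\[
\frac{\uvk^{\frac12+\alpha}}{\uvk^\alpha+\vark^\alpha}(\pdf\uvk-\pd_x^2\uvk)
=\frac{\uvk^{\alpha/2}}{(\uvk^\alpha+\vark^\alpha)^{1/2}}\cdot\sqrt{\frac{\uvk^{1+\alpha}}{\uvk^\alpha+\vark^\alpha}}\,(\pdf\uvk-\pd_x^2\uvk),
\]
the first factor is bounded by $1$ since $\uvk^\alpha\le\uvk^\alpha+\vark^\alpha$, while the squared $L^2$-norm of the second factor over $\Dld$ is exactly the dissipation integral bounded by $C(\|u_0\|_{H^2})$ via \eqref{n41}. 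Hence $I_1\le\|\phi\|_{\Lty}\,C(\|u_0\|_{H^2})\,(\mathrm{meas}\,\Dld)^{1/2}$, and since $\Dld\subset\{|\uvk|\le 3\delta/2\}$ the measure estimate \eqref{mes} gives $I_1\le C\|\phi\|_{\Lty}T^{1/2}(-\ln\delta)^{-1/2}$, which vanishes as $\delta\to0$.

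For $I_2$ I would argue exactly as in Lemma~\ref{claim3.5}: on $B_\delta$ the lower bound $\uvk>\delta/2$ together with \eqref{temp711} produces the weak convergence $\pdf\uvk-\pd_x^2\uvk\rightharpoonup\pdf u-\pd_x^2 u$ in $L^2(B_\delta)$, as recorded in \eqref{tmbd}. The mobility coefficient $\uvk^{\frac12+\alpha}/(\uvk^\alpha+\vark^\alpha)$ is uniformly bounded by $\uvk^{1/2}\le C(u_0)$, and on $B_\delta$ it stays away from the degeneracy, so by the pointwise convergence \eqref{strong1} it converges a.e.\ to $u^{1/2}$ there (where $u>\delta>0$); dominated convergence then upgrades this to strong $L^2(B_\delta)$ convergence to $u^{1/2}$. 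Multiplying the strongly convergent factor $\phi\,\uvk^{\frac12+\alpha}/(\uvk^\alpha+\vark^\alpha)$ against the weakly convergent factor $\pdf\uvk-\pd_x^2\uvk$ yields $I_2\to\int\int_{B_\delta}\phi\,u^{1/2}(\pdf u-\pd_x^2 u)$. Finally I would combine the two estimates, run the same diagonal argument in $(\delta,\veps)$ as in Lemma~\ref{claim3.5}, and use $P_T=\bigcup_{\delta>0}B_\delta$ to pass from $B_\delta$ to $P_T$.

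Since the scheme is identical to Lemma~\ref{claim3.5}, there is no genuinely new obstacle; the only points requiring (minor) care are the H\"older factorization for $I_1$ — one must confirm the residual factor $\uvk^{\alpha/2}/(\uvk^\alpha+\vark^\alpha)^{1/2}$ is uniformly bounded (here by $1$), so that the small-$u$ contribution is controlled purely through the measure of the degenerate set — and the fact that the limiting coefficient on $B_\delta$ is now $u^{1/2}$ rather than $u$. Both are immediate consequences of $0<\alpha<1$, so the argument is pure bookkeeping of the altered exponent.
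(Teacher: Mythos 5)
Your proposal is correct and matches the paper's treatment: the paper proves Lemma~\ref{claim3.5-n} simply by declaring its proof ``exactly the same as Lemma~\ref{claim3.5},'' and your write-up carries out precisely that adaptation, with the right bookkeeping (the residual factor $\uvk^{\alpha/2}/(\uvk^\alpha+\vark^\alpha)^{1/2}\le 1$ in the H\"older step for $I_1$, and the bounded pointwise convergence of the coefficient to $u^{1/2}$ on $B_\delta$ for $I_2$, where the lower bound $u>\delta$ makes the square root harmless). No gaps; this is the intended argument.
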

First recall the regularized solution $\uv$ satisfies the energy-dissipation equality \eqref{Ev01}, i.e.,
\begin{equation*}
      E(\uv(\cdot,T))+2\int_0^T\int_\I \frac{\uv^{1+\alpha}}{\uv^{\alpha+\varepsilon}}(\pd_x^4 \uv- \pd_x^2 \uv )^2\ud x \ud t =E(\uv(\cdot,0)).
    \end{equation*}
From Lemma \ref{claim3.5-n}, we have
$$\frac{\uv^{\frac12+\alpha}}{\uv^\alpha+ \varepsilon^\alpha}(\pd_x^4 \uv- \pd_x^2 \uv ) \rightharpoonup  u^{\frac12} (\pd_x^4 u- \pd_x^2 u), \text{ in }L^2(P_T).$$
Then by the lower semi-continuity of norm, we know
\begin{equation}\label{ss1}
\begin{aligned}
  \int\int_{P_T}u  (\pd_x^4 u- \pd_x^2 u)^2\ud x\ud t\leq& \liminf_{\varepsilon\rightarrow 0} \int\int_{P_T} \frac{\uv^{1+2\alpha}}{(\uv^\alpha+ \varepsilon^\alpha)^2}(\pd_x^4 \uv- \pd_x^2 \uv )^2 \ud x\ud t\\
  \leq&\liminf_{\varepsilon\rightarrow 0} \int\int_{P_T} \frac{\uv^{1+\alpha}}{\uv^\alpha+ \varepsilon^\alpha}(\pd_x^4 \uv- \pd_x^2 \uv )^2 \ud x \ud t.
  \end{aligned}
\end{equation}
Also from \eqref{es1} and lower semi-continuity of norm, we have
\begin{equation}\label{ss2}
  E(u(t,\cdot))\leq \liminf_{\varepsilon\rightarrow 0} E(\uv(t,\cdot)), \text{ for } t\in[0,T].
\end{equation}
Combining \eqref{Ev01}, \eqref{ss1} and \eqref{ss2}, we obtain
$$E(u(T,\cdot))+2\int\int_{P_T}u  (\pd_x^4 u- \pd_x^2 u)^2\ud x\ud t \leq E(u(0,\cdot)).$$

{
Second, recall the regularized solution $\uv$ satisfies the energy-dissipation equality \eqref{Fv01}, i.e.,
\begin{equation*}
      F_\varepsilon(\uv(T,\cdot))+\int_0^TE(\uv(t,\cdot)) \ud t =F_\varepsilon(\uv(0,\cdot)).
\end{equation*}
From \eqref{es1} and the lower semi-continuity of norm, we know
\begin{equation}\label{add18_2}
 \begin{aligned}
  &\int_0^TE(u(t,\cdot))\ud t\leq \liminf_{\varepsilon\rightarrow 0} \int_0^T E(\uv(t,\cdot))\ud t,\\
  &F(u(t,\cdot))\leq \liminf_{\varepsilon\rightarrow 0} F(\uv(t,\cdot)), \text{ for any }t\in[0,T].
  \end{aligned}
\end{equation}
Recall $F_\varepsilon(\uv)=\int_\I \frac{\varepsilon^\alpha}{1-\alpha} \frac{1}{u^{\alpha-1}}+ \uv\ud x$ and $0<\alpha<1$.
Then from the strong convergence \eqref{strong} we know 
$$\lim_{\varepsilon\to 0}F_\varepsilon(\uv)=F(u).$$
 This, together with \eqref{add18_2}, implies
$$F(u(T,\cdot))+\int_0^TE(u(t,\cdot)) \ud t \leq F(u(0,\cdot)).$$
Hence we complete the proof of Theorem \ref{global_th}.
}
\subsection{Long time behavior} We finally prove all the weak solution obtained in Theorem \ref{global_th} will converge to a constant as time goes to infinite. However, as explained in Remark \ref{rem2}, we can not characterize the limit constant uniquely.
\begin{theorem}\label{long2}
Under the same assumptions of Theorem \ref{global_th}, for every weak solution $u$ obtained in Theorem \ref{global_th}, there exists a constant $u^\star$ such that,
   as time $t\rightarrow +\infty,$ $u$ converges to $u^\star$ in the sense
\begin{equation}
 \|u(t, \cdot)-u^\star\|_{ H^2(\I)}\rightarrow 0, \text{  as }t\rightarrow +\infty.
\end{equation}
\end{theorem}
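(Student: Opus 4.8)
The plan is to read off the long-time behavior entirely from the two energy--dissipation inequalities \eqref{E01} and \eqref{F01} of the weak solution, combined with the Poincar\'e inequality on the torus. The candidate limit is the limiting \emph{mean} of $u$, which by Observation 1 is dissipated and therefore cannot be computed from the initial data alone (this is the point of Remark \ref{rem2}).

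First I treat the mean. Applying \eqref{F01} on $[0,T]$ and using $u\geq 0$ (so $F(u(T))\geq 0$) gives $\int_0^T E(u(t))\ud t\leq F(u(0))$ for every $T$, whence
\[
\int_0^\infty E(u(t))\ud t \leq F(u(0)) < \infty .
\]
Since \eqref{F01} may be applied on any subinterval $[t_1,t_2]$ --- this is legitimate because the regularized identity \eqref{Fv01} holds on arbitrary time intervals and survives the passage $\veps\to 0$ --- the map $t\mapsto F(u(t))$ is non-increasing and bounded below by $0$, hence converges to some $u^\star:=\lim_{t\to\infty}F(u(t))\geq 0$. This $u^\star$ is the desired constant; it equals $\lim_{t\to\infty}\int_\I u(t,x)\ud x$.

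Next I show $E(u(t))\to 0$. By the same restarting argument applied to the first energy--dissipation inequality \eqref{E01} (itself the limit of \eqref{Ev01}, which is an equality on every subinterval), the map $t\mapsto E(u(t))$ is non-increasing, so $E_\infty:=\lim_{t\to\infty}E(u(t))$ exists and is $\geq 0$. If $E_\infty>0$ the integral $\int_0^\infty E(u(t))\ud t$ would diverge, contradicting the bound above; hence $E_\infty=0$, that is $\|\pd_x u(t)\|_{L^2(\I)}^2+\|\pd_{xx}u(t)\|_{L^2(\I)}^2=E(u(t))\to 0$. Writing $\bar u(t):=\int_\I u(t,x)\ud x=F(u(t))$, the Poincar\'e--Wirtinger inequality $\|u(t)-\bar u(t)\|_{L^2(\I)}\leq C_P\|\pd_x u(t)\|_{L^2(\I)}$ and the fact that $u-\bar u$ has the same $x$-derivatives as $u$ yield
\[
\|u(t)-\bar u(t)\|_{H^2(\I)}^2 \leq (1+C_P^2)\|\pd_x u(t)\|_{L^2(\I)}^2+\|\pd_{xx}u(t)\|_{L^2(\I)}^2 \leq (1+C_P^2)\,E(u(t)) \to 0 .
\]
Since $\bar u(t)-u^\star$ is constant on $\I$ (of unit measure), $\|\bar u(t)-u^\star\|_{H^2(\I)}=|\bar u(t)-u^\star|\to 0$ by the first step, and the triangle inequality $\|u(t)-u^\star\|_{H^2(\I)}\leq \|u(t)-\bar u(t)\|_{H^2(\I)}+|\bar u(t)-u^\star|$ gives the claim.

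The main obstacle is not any individual estimate but the rigorous justification that $F(u(t))$ and $E(u(t))$ are genuinely non-increasing in $t$ for the \emph{weak} solution, rather than merely satisfying the inequalities measured from the initial time; this rests on the energy equalities \eqref{Ev01} and \eqref{Fv01} for the regularized solutions holding on every subinterval and passing to the limit. One must also be careful that, since $u\in C([0,T];H^1(\I))\cap L^\infty([0,T];H^2(\I))$ but not $C([0,T];H^2(\I))$, the quantity $E(u(t))$ is a priori defined only for a.e.\ $t$ and should be read through its non-increasing (lower-semicontinuous) representative, so that the limit $t\to\infty$ in the $H^2$ norm is taken along that representative.
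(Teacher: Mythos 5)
Your proof is correct, and it takes a genuinely different route from the paper's. The paper first combines the two dissipation inequalities into the quantitative bound $TE(u(T))\leq F(u_0)-F(u(T))$, giving the decay rate $E(u(t))\leq F(u_0)/t$; it then uses the uniform $L^\infty(0,\infty;H^2)$ bound and the compact embedding $H^2(\I)\hookrightarrow H^1(\I)$ to extract a subsequential limit $u^\star$ along times $t_n\to\infty$, and identifies it as a constant because $E(u(t))\to 0$. You instead avoid compactness altogether: you get $E(u(t))\to 0$ from monotonicity plus integrability of $E$, you \emph{define} $u^\star$ as the limit of the monotone, bounded mean $F(u(t))=\int_\I u(t)\ud x$, and you close the argument with Poincar\'e--Wirtinger and the triangle inequality. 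Your route buys two things: it is more elementary, and it pins down the limit constant as the limiting mass, which automatically gives convergence of the \emph{whole} trajectory --- the paper's compactness argument by itself only yields subsequential convergence, and the step promoting this to full convergence (via strict convexity of $E$) is left rather implicit there; your monotone mean supplies exactly the missing uniqueness. What the paper's route buys is the explicit $O(1/t)$ decay rate for $E(u(t))$ (though you could recover this, indeed $E(u(t))=o(1/t)$, from $\tfrac{t}{2}E(u(t))\leq\int_{t/2}^{t}E(u(s))\ud s\to 0$ using your monotonicity). Note also that both arguments rest on the same unstated prerequisite, which you commendably make explicit: the inequalities \eqref{E01} and \eqref{F01} must hold restarted from arbitrary initial times, which is true for the solutions constructed in Theorem \ref{global_th} because \eqref{Ev01} and \eqref{Fv01} are identities on every subinterval and survive the limit $\veps\to 0$; the paper needs exactly this when it silently uses monotonicity of $E$ to pass from $\int_0^T E(u(t))\ud t$ to $TE(u(T))$.
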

\begin{proof}
First, from the energy dissipation \eqref{F01}, we have for any $T>0$
\begin{equation}
TE(u(T))\leq F(u_0)-F(u(T)),
\end{equation}
which implies
\begin{equation}
E(u(t, \cdot))\leq \frac{1}{t}F(u_0) = \frac{c}{t}\to 0, \text{ as } t\rightarrow +\infty.
\end{equation}

Second, since \eqref{high} and \eqref{supnorm} are uniform in time, we actually have
\begin{equation}
\|u\|_{L^\8(0,+\8; \Hper^2(\I))}\leq c(u_0).
\end{equation}
Notice $H^2(\I)\hookrightarrow H^1(\I)$ compactly.
Then there exists a subsequence $t_{n}\to + \8$ and $u^\star \text{  in } H^1(\I)$ such that
\begin{equation}\label{802f}
  u(t_{n},{\cdot})\rightarrow u^\star({\cdot}), \text{  in } H^1(\I) \text{ as }t_{n}\rightarrow +\infty.
\end{equation}
On the other hand, since $E(u)$ is strictly convex in $\dot{H}^2$ we know $E$ has a unique critical point $w$ in $\dot{H}^2$ and
\begin{equation}
E(u(t, \cdot))\to \int_\I (\pd^2_x w)^2+(\pd_x w)^2 \ud x = 0 \text{ as }t \to +\8.
\end{equation}
Therefore $w=u^*=\text{const}$ and
\begin{equation}
  u(t,{\cdot})\rightarrow u^\star({\cdot}), \text{  in } H^2(\I) \text{ as }t\rightarrow +\infty.
\end{equation}
\end{proof}
\begin{remark}\label{rem2}
We mention that one cannot characterize the limit constant by the dissipation law \eqref{E01} since the dissipation term $\int\int_{P_T}2u(\pd_x^4 u - \pd_x^2 u)^2 \ud x \ud t$ holds only on $P_T$. Neither can we characterize the limit constant by conservation law in Observation 3 since it holds only for strict positive $u$ and we do not know how much we lose when $u$ touches zero.
\end{remark}


\appendix

\section{Small data existence in the Wiener algebra}
\label{sec:aprori}
This section will follow the Weiner algebra framework established in \cite{GBM2018,ambrose} in periodic settings and \cite{LS2018} on $\mathbb{R}^{d}$, $d \geq 1$, for the fully 4th order model. Since the general framework is very similar, we just state the main results in Section \ref{sec:mainResults} and give the key estimates.  { Since these results are easy to state and prove in general dimension, we will present them as such.}

\subsection{Notation}
We introduce the following useful norms:
\bea\label{weightednorm}
\|f\|_{\mathcal{\dot{F}}^{s,p}}^{p}(t) \eqdef \int_{\mathbb{R}^d} |\xi|^{sp} |\hat{f}(\xi,t)|^{p} d\xi, \quad s > -d/p,\quad 1 \le p\le 2.
\eea
We note that the Wiener algebra $A(\mathbb{R}^d)$ is $\mathcal{\dot{F}}^{0,1}$, and the condition $\Delta h_0 \in A(\mathbb{R}^d)$ is given by $h_0 \in \mathcal{\dot{F}}^{2,1}$.  Here $\hat{f}$ is the standard Fourier transform of $f$:
\begin{equation} \label{fourierTransform}
\hat{f}(\xi) \eqdef  \mathcal{F}[f](\xi) = \frac{1}{(2\pi)^{d/2}}\int _{\mathbb{R}^{d}} f(x) e^{- i x\cdot \xi} dx.
\end{equation}
When $p=1$ we denote the norm by
\begin{equation} \label{Snorm}
\|f\|_{s} \eqdef \int_{\mathbb{R}^{d}} |\xi|^{s}|\hat{f}(\xi)| \ d\xi.
\end{equation}
We will use this norm generally for $s>-d$ and we refer to it as the \textit{s-norm}.
Notice that
for any $n\in\mathbb{N}$ we have
\ba
|\D^n f(x)|\leq  \int_{\mathbb{R}^d} |\xi|^{n} |\hat{f}(\xi)| d\xi = \|f\|_n.
\ea

To further study the case $s = -d$, then for $s\ge -d$ we define the  \textit{Besov-type s-norm}:
\begin{equation} \label{normSinfty}
\|f\|_{s,\infty} \eqdef \Big\|\int_{C_{k}} |\xi|^{s}|\hat{f}(\xi)| \ d\xi\Big\|_{\ell^{\infty}_{k}}
=
\sup_{k \in \mathbb{Z}} \int_{C_{k}} |\xi|^{s}|\hat{f}(\xi)| \ d\xi,
\end{equation}
where for $k \in \mathbb{Z}$ we have
\begin{equation} \label{Cj}
C_{k} = \{\xi \in \mathbb{R}^d : 2^{k-1}\leq |\xi| < 2^{k}\} \,.
\end{equation}
Note that we have the inequality
\begin{equation}\label{ineqbd}
\|f\|_{s,\infty}  \leq \int_{\mathbb{R}^{d}} |\xi|^{s}|\hat{f}(\xi)| \ d\xi = \|f\|_{s}.	
\end{equation}
We note that $$\|f\|_{-d/p,\infty} \lesssim \|f\|_{L^p(\mathbb{R}^d)}$$ for $p\in [1,2]$ as is shown in \cite[Lemma 5]{MR3683311}.

Further, when $p=2$ we denote the norm (for $s > -d/2$) by
\bea\label{weighted2norm}
\|f\|_{\mathcal{\dot{F}}^{s,2}}^{2} \eqdef \int_{\mathbb{R}^d} |\xi|^{2s} |\hat{f}(\xi)|^{2} d\xi
=   \|f\|_{\dot{H}^s}^2
    = \|(-\Delta)^{s/2}f\|_{L^2(\mathbb R^d)}^2.
\eea

We also introduce the following notation for an iterated convolution
$$
f^{*2}(x) = (f * f)(x) = \int_{\mathbb{R}^d} f(x-y) f(y) dy,
$$
where $*$ denotes the standard convolution in $\mathbb{R}^d$.  Furthermore in general
$$
f^{*j}(x) = (f * \cdots * f)(x),
$$
where the above contains $j-1$ convolutions of $j$ copies of $f$.  Then by convention when $j=1$ we have
$f^{*1} =f$, and further we use the convention $f^{*0} =1$.

We additionally use the notation $A \lesssim B$ to mean that there exists a positive inessential constant $C>0$
such that $A \le C B$.    The notation $\approx$ used as $A \approx B$ means that both $A \lesssim B$  and $B \lesssim A$  hold.

\subsection{Main results}  \label{sec:mainResults}

We have the following results for small, highly regular data.

\begin{theorem}\label{Main.Theorem}
Consider initial data $h_0 \in \mathcal{\dot{F}}^{0,2}\cap \mathcal{\dot{F}}^{2,1}$ further satisfying 
\[ \| h_0 \|_{2} < y_* \]
  where $y_*>0$ is given explicitly in Remark \ref{constant.size}.  Then there exists a global in time unique solution to \eqref{pde} given by $h(t) \in C^0_t (\mathcal{\dot{F}}^{0,2}\cap\mathcal{\dot{F}}^{2,1})$ and we have that
\begin{equation}\label{21normbound}
 \| h \|_2(t)  + \sigma_{2,1} \int_0^t   \|  h \|_6(\tau) d\tau
  \le    \| h_0 \|_2
\end{equation}
with $\sigma_{2,1}>0$ defined by \eqref{sigma.constant.def}.
\end{theorem}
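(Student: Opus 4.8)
The plan is to work entirely on the Fourier side and treat \eqref{pde} as a dissipative linear flow perturbed by an analytic nonlinearity. Writing $w:=\Delta h$ and expanding $e^{-\Delta h}=1-w+\sum_{n\ge 2}\frac{(-w)^n}{n!}$, the equation \eqref{pde} becomes $\partial_t h = -\Delta^2 h + \Delta h + (\Delta-1)M$ with $M:=\sum_{n\ge2}\frac{(-w)^n}{n!}$; in Fourier the linear symbol is $-(|\xi|^4+|\xi|^2)\le 0$, so both the fourth- and the second-order parts are dissipative and there is no destabilizing linear term. I would record the Duhamel formula $\hat h(\xi,t)=e^{-(|\xi|^4+|\xi|^2)t}\hat h_0(\xi)+\int_0^t e^{-(|\xi|^4+|\xi|^2)(t-\tau)}\widehat{(\Delta-1)M}(\xi,\tau)\,d\tau$ and construct the solution by a Picard iteration in the space of functions with $\sup_{[0,T]}\|h\|_2+\sigma_{2,1}\int_0^T\|h\|_6\,d\tau<\infty$; the smallness hypothesis $\|h_0\|_2<y_*$ is exactly what makes this map a contraction, and uniqueness follows from the same difference estimate. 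The engine of both the contraction and the bound \eqref{21normbound} is a single \emph{a priori} differential inequality for $\|h\|_2$.

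To derive it I would use $\partial_t|\hat h|\le \mathrm{Re}\,(e^{-i\arg\hat h}\,\partial_t\hat h)$, so that the real dissipative symbol contributes at full strength; multiplying by $|\xi|^2$ and integrating gives
\[
\frac{d}{dt}\|h\|_2 \le -\|h\|_6 - \|h\|_4 + \|M\|_4 + \|M\|_2,
\]
where I used $\widehat{(\Delta-1)M}=-(|\xi|^2+1)\widehat M$, so the nonlinear contribution to the $2$-norm is exactly $\int(|\xi|^4+|\xi|^2)|\widehat M|\,d\xi=\|M\|_4+\|M\|_2$. Thus the two dissipative norms $\|h\|_6$ and $\|h\|_4$ produced by the linear part are precisely the quantities against which the nonlinearity must be controlled.

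The key nonlinear estimate is where all the work lies. For each monomial $w^n$ I would pass to Fourier, where $\widehat{w^n}=\hat w^{*n}$ and $|\hat w(\zeta)|=|\zeta|^2|\hat h(\zeta)|$, and bound $|\xi|^s\le(\sum_{i=1}^n|\zeta_i|)^s$ on the simplex $\xi=\zeta_1+\cdots+\zeta_n$. Expanding by the multinomial theorem and factorizing the resulting product integrals gives $\|w^n\|_s\le\sum_{|\alpha|=s}\binom{s}{\alpha}\prod_{i=1}^n\|h\|_{2+\alpha_i}$. Because the $s$-norm \eqref{Snorm} is logarithmically convex in $s$ (a one-line H\"older argument), each factor obeys $\|h\|_{2+\alpha_i}\le\|h\|_2^{(4-\alpha_i)/4}\|h\|_6^{\alpha_i/4}$, and since $\sum_i\alpha_i=s$ every product collapses to $\|h\|_2^{\,n-s/4}\|h\|_6^{\,s/4}$; summing the multinomial coefficients ($\sum_{|\alpha|=s}\binom{s}{\alpha}=n^s$) yields $\|w^n\|_4\le n^4\|h\|_2^{n-1}\|h\|_6$. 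For $\|M\|_2$ I would instead interpolate against $\|h\|_4$ (using $\|h\|_3^2\le\|h\|_2\|h\|_4$) to obtain the cleaner bound $\|w^n\|_2\le n^2\|h\|_2^{n-1}\|h\|_4$. Dividing by $n!$ and summing gives $\|M\|_4\le\|h\|_6\,g_4(\|h\|_2)$ and $\|M\|_2\le\|h\|_4\,g_2(\|h\|_2)$ with the entire functions $g_4(y)=\sum_{n\ge2}\frac{n^4}{n!}y^{n-1}$ and $g_2(y)=\sum_{n\ge2}\frac{n^2}{n!}y^{n-1}$, both vanishing at $y=0$.

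Putting these together, the inequality becomes $\frac{d}{dt}\|h\|_2\le-(1-g_4(\|h\|_2))\|h\|_6-(1-g_2(\|h\|_2))\|h\|_4$. I would define $y_*$ as the first positive root of $g_4(y)=1$ (Remark \ref{constant.size}) and set $\sigma_{2,1}:=1-g_4(\|h_0\|_2)>0$ as in \eqref{sigma.constant.def}; since $g_2\le g_4$ on $[0,\infty)$ the $\|h\|_4$ term is also nonpositive. Then $\|h\|_2$ is nonincreasing, so $\|h\|_2(t)$ never leaves $[0,y_*)$, the coefficient $1-g_4(\|h\|_2)$ stays $\ge\sigma_{2,1}$ for all time, and integrating yields \eqref{21normbound}; a standard continuity/bootstrap argument upgrades this \emph{a priori} bound to a global statement for the iterates and hence global existence. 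The main obstacle is the nonlinear estimate of the previous paragraph: one must control the full exponential — infinitely many terms — uniformly, which forces the careful multinomial-plus-interpolation bookkeeping so that every term is absorbed by the dissipation with a coefficient that is summable in $n$ and small in $\|h\|_2$. The secondary subtlety is that the second-order part of \eqref{pde} produces contributions of lower differential order, which is exactly why one needs the extra $\|h\|_4$ dissipation rather than $\|h\|_6$ alone.
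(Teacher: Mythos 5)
Your proposal is correct and follows essentially the same route as the paper's proof: the same Taylor expansion of the exponential, the same Fourier-side differential inequality $\frac{d}{dt}\|h\|_2 + \|h\|_6 + \|h\|_4 \le f_2(\|h\|_2)\,\|h\|_6 + f_0(\|h\|_2)\,\|h\|_4$ (your $g_4,g_2$ are the paper's $f_2,f_0$ from \eqref{infinite.series.fcn}), the same threshold $f_2(y_*)=1$ and constant $\sigma_{2,1}$ from \eqref{sigma.constant.def}, and the same monotonicity/bootstrap argument that integrates \eqref{energy3} to give \eqref{21normbound}. The only cosmetic difference is how the key convolution bound $\|w^n\|_s \le n^s\|h\|_2^{n-1}\|h\|_{s+2}$ is derived: you expand multinomially and interpolate back via log-convexity of the $s$-norm, while the paper uses the weighted triangle inequality \eqref{triangleS} plus Young's inequality to place all $s$ derivatives on a single factor, both yielding the identical estimate \eqref{convolution1}.
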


In the next remark we explain the size of the constant.

\begin{remark}\label{constant.size}  We can compute precisely the size of the constant $y_*$ from Theorem \ref{Main.Theorem}.  In particular the condition that it should satisfy is that
$$
f_2(y_*)   = (y_*^3 + 6y_*^2+7y_*+1)e^{y_*}-1
=
\sum_{j=1}^{\infty}  \frac{(j+1)^3  }{j!}  y_*^{j}<1.
$$
This is identical to the threshold found in \cite{LS2018}, the reason for which will be born out below.  Note, this constant can likely be sharpened as in the work of \cite{ambrose}.
\end{remark}

Now in the next theorem we prove the large time decay rates, and the propagation of additional regularity, for the solutions above.

\begin{theorem}\label{Cor.Main.Theorem}
Given the solution to \eqref{pde} from Theorem \ref{Main.Theorem}.  Suppose additionally that $\| h_0 \|_{s_1} <\infty$ and $\| h_0 \|_{s_2} <\infty$ for some $-1<s_1< s_2$. Then we have the following uniform decay estimate for $t\ge 0$:
\begin{equation}\label{fastest.decay.rate}
\| h \|_{s_2} \lesssim  (1+t)^{-\frac{s_2-s_1}{2}}.
\end{equation}
The implicit constant in the inequality above depends on $\| h_0 \|_2$, $\| h_0 \|_s$.
\end{theorem}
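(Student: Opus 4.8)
The plan is to derive a differential inequality for the $s$-norm directly from the equation in Fourier variables and then extract the algebraic rate by a Fourier splitting argument of Schonbek type. Writing $w=\Delta h$ and expanding $e^{-w}=1-w+\sum_{j\ge 2}\frac{(-1)^j}{j!}w^j$, equation \eqref{pde} becomes $\partial_t h=-\Delta^2 h+\Delta h+N$ with nonlinearity $N=(\Delta-1)\bigl(e^{-\Delta h}-1+\Delta h\bigr)=(\Delta-1)\sum_{j\ge 2}\frac{(-1)^j}{j!}(\Delta h)^j$. On the Fourier side the linear part is the multiplier $-(|\xi|^4+|\xi|^2)$, so that $\partial_t|\hat h|\le -(|\xi|^4+|\xi|^2)|\hat h|+|\hat N|$ pointwise in $\xi$ (justified rigorously on mollified solutions, as in \cite{LS2018}). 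Multiplying by $|\xi|^{s}$ and integrating gives, for $s\in\{s_1,s_2\}$,
\[ \frac{d}{dt}\|h\|_{s}+\|h\|_{s+4}+\|h\|_{s+2}\le \|N\|_{s}. \]

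First I would control the nonlinearity by the dissipation. Since $\widehat{(\Delta h)^j}=(\widehat{\Delta h})^{*j}$ and $\|\Delta h\|_{\sigma}=\|h\|_{\sigma+2}$, the Leibniz-type convolution estimates for the $s$-norm (distributing the weight $|\xi|^{\sigma}\lesssim\sum_k|\xi_k|^{\sigma}$ over the $j$ factors, as in \cite{LS2018}) yield, for each $j\ge 2$, $\int|\xi|^{s}(|\xi|^2+1)\,|(\widehat{\Delta h})^{*j}(\xi)|\,d\xi\lesssim j^{s+3}\|h\|_2^{\,j-1}\bigl(\|h\|_{s+4}+\|h\|_{s+2}\bigr)$. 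Summing the convergent series $\sum_{j\ge 2}\frac{1}{j!}j^{s+3}\|h\|_2^{\,j-1}$ gives $\|N\|_{s}\le\phi(\|h\|_2)\bigl(\|h\|_{s+4}+\|h\|_{s+2}\bigr)$ with $\phi(y)\to 0$ as $y\to 0$. By Theorem \ref{Main.Theorem} the solution obeys $\|h\|_2(t)\le\|h_0\|_2<y_*$ for all $t$; since $\phi(0)=0$, after possibly shrinking the smallness threshold we may assume $\phi(\|h_0\|_2)\le\tfrac12$, so the nonlinear term is absorbed, leaving $\frac{d}{dt}\|h\|_{s}+\tfrac12\bigl(\|h\|_{s+4}+\|h\|_{s+2}\bigr)\le 0$ for $s\in\{s_1,s_2\}$. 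In particular $\|h\|_{s_1}(t)\le\|h_0\|_{s_1}$ and $\|h\|_{s_2}(t)\le\|h_0\|_{s_2}$, which already yields the propagation of regularity and the uniform bound needed for small $t$.

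Next I would run the Fourier splitting. For a radius $\rho>0$, the elementary lower bound $\|h\|_{s_2+2}\ge\rho^2\|h\|_{s_2}-\rho^{\,s_2-s_1+2}\|h\|_{s_1}$ (splitting $\{|\xi|\le\rho\}$ and $\{|\xi|>\rho\}$, and using $|\xi|^{s_2-s_1}\le\rho^{\,s_2-s_1}$ on the low block since $s_2>s_1$), combined with the inequality just obtained and $\|h\|_{s_1}\le\|h_0\|_{s_1}$, gives $\frac{d}{dt}\|h\|_{s_2}+\tfrac12\rho^2\|h\|_{s_2}\le\tfrac12\rho^{\,s_2-s_1+2}\|h_0\|_{s_1}$. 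Choosing the time-dependent radius $\rho(t)^2=\beta/(1+t)$ and using the integrating factor $(1+t)^{\beta/2}$, integration in time yields $\|h\|_{s_2}(t)\lesssim\|h_0\|_{s_2}(1+t)^{-\beta/2}+\|h_0\|_{s_1}(1+t)^{-(s_2-s_1)/2}$; taking $\beta>s_2-s_1$ makes the first term decay faster and produces the claimed rate $\|h\|_{s_2}\lesssim(1+t)^{-(s_2-s_1)/2}$ uniformly for $t\ge 0$.

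The main obstacle is the nonlinear estimate, specifically ensuring that the highest-order contribution $\Delta(e^{-\Delta h}-1+\Delta h)$ only ever produces the norm $\|h\|_{s+4}$ matched by the biharmonic dissipation, so that smallness of $\|h\|_2$ renders it genuinely perturbative; the quadratic ($j=2$) term is the dangerous one and fixes the threshold, which is why it coincides with the one in \cite{LS2018}. A secondary technical point is the range $-1<s_2<0$, where the weight $|\xi|^{s_2}$ is singular at the origin and the naive bound $|\xi|^{s_2}\lesssim\sum_k|\xi_k|^{s_2}$ fails; there one instead exploits that each convolution factor $\widehat{\Delta h}=-|\xi|^2\hat h$ vanishes to second order at $\xi=0$, handling the low-frequency block with the Besov-type norm $\|\cdot\|_{s,\infty}$ from \eqref{normSinfty} as in \cite{LS2018}.
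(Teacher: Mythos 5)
Your proposal is correct and reaches the stated estimate, but it diverges from the paper in the final step, so a comparison is worthwhile. Both arguments begin the same way: your derivation of the $s$-norm differential inequality $\frac{d}{dt}\|h\|_{s}+\|h\|_{s+4}+\|h\|_{s+2}\le \|N\|_{s}$, with the nonlinearity absorbed via the convolution estimate and the monotone bound $\|h\|_2(t)\le\|h_0\|_2$ from Theorem \ref{Main.Theorem}, is exactly the paper's high-order a priori estimate \eqref{energy4}--\eqref{energy-diss2}. The difference is in how decay is extracted from that inequality: the paper invokes the Patel--Strain decay lemma (Lemma \ref{decaylemma}) as a black box, applied with $\mu=s_2$, $\gamma=2$, $\rho=s_1$ after noting $\|h\|_{s_1,\infty}\le\|h\|_{s_1}\le\|h_0\|_{s_1}$, whereas you reprove the decay directly by a Schonbek-type Fourier splitting with time-dependent radius $\rho(t)^2=\beta/(1+t)$, $\beta>s_2-s_1$, and an integrating factor. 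Your splitting computation is sound, and it is in essence the proof of the cited lemma, so what you gain is a self-contained argument; what you give up is a sliver of generality, since your low-frequency block is controlled by the full norm $\|h\|_{s_1}\le\|h_0\|_{s_1}$ while the lemma requires only the weaker Besov-type bound $\|\cdot\|_{s_1,\infty}$ of \eqref{normSinfty} --- immaterial here because the stronger bound is available by monotonicity of $\|h\|_{s_1}$. Finally, the two caveats you flag are genuine and worth keeping: (i) for $s_2>2$ the absorption needs the stricter threshold $\|h_0\|_2<\min(y_{s_2*},y_*)$ of \eqref{condition2}, not merely the smallness of Theorem \ref{Main.Theorem}, a point the paper's statement also passes over silently; and (ii) the weight-distribution inequality \eqref{triangleS} underlying \eqref{convolution1} is only justified for $s\ge 0$ (the paper states it for $s\ge1$), so the range $-1<s<0$ requires the additional low-frequency treatment you sketch, whereas the paper uses \eqref{energy-diss2} for all $s>-1$ without comment.
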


We will only prove Theorems \ref{Main.Theorem} and \ref{Cor.Main.Theorem} in the following sections.  The proof will follow very closely the framework from \cite{LS2018,ambrose} so we just show the key estimates.

\subsection{Proof of Theorem \ref{Main.Theorem}}
In this section we prove the apriori estimates for the exponential PDE in \eqref{pde} and \eqref{pdeSum} in the spaces $\mathcal{\dot{F}}^{s,1}$.  The key point to the global in time classical solution is that we can prove a global in time Lyapunov inequality  \eqref{energy3} below under an $O(1)$ medium size smallness condition on the initial data.
\subsubsection{A priori estimate in $\mathcal{\dot{F}}^{2,1}$}
  We first establish the case of $\mathcal{\dot{F}}^{2,1}$ in order to explain the main idea in the simplest way.   The equation \eqref{pde} can be recast by Taylor expanion as
\begin{equation}\label{pdeSum}
   h_t + \Delta^2 h -\Delta h = \Delta \sum_{j=2}^{\infty}  \frac{(-\Delta h)^j}{j!}-\sum_{j=2}^{\infty}  \frac{(-\Delta h)^j}{j!}.
\end{equation}
We look at this equation \eqref{pdeSum} using the Fourier transform \eqref{fourierTransform} so that equation (\ref{pde}) is expressed as
\begin{align} \label{Fourier1}
   \partial_t  \hat h(\xi,t) & +  |\xi|^4 \hat h(\xi,t) +  |\xi|^2 \hat h(\xi,t) \\
   & = -    |\xi|^2\sum_{j=2}^{\infty}  \frac{1}{j!}  (|\cdot|^2 \hat h)^{*j} (\xi,t)  -\sum_{j=2}^{\infty}  \frac{1}{j!}  (|\cdot|^2 \hat h)^{*j} (\xi,t) \,. \notag
\end{align}
We multiply the above by $|\xi|^2$ to obtain
\begin{align} \label{Fourier2}
   \partial_t  |\xi|^2 \hat h(\xi,t) & + |\xi|^6 \hat h(\xi,t) + |\xi|^4 \hat h(\xi,t)
   = \\
  & -    |\xi|^4 \sum_{j=2}^{\infty}  \frac{1}{j!}  (|\cdot|^2 \hat h)^{*j} (\xi,t)
   -    |\xi|^2 \sum_{j=2}^{\infty}  \frac{1}{j!}  (|\cdot|^2 \hat h)^{*j} (\xi,t) . \notag
\end{align}
We will estimate this equation on the Fourier side in the following.

Our first step will be to estimate the infinite sum in \eqref{Fourier2}.   To this end notice that for any real number $s \ge 0$ the following triangle inequality holds:
\begin{equation}
\label{triangleS}
    |\xi|^{s}
    \le   j^{(s-1)^+} ( |\xi-\xi_1|^{s} + \cdots + |\xi_{j-2}-\xi_{j-1}|^{s} + |\xi_{j-1}|^{s}),
\end{equation}
where
$(s-1)^+ = s-1$ if $s\ge 1$ and  $(s-1)^+ = 0$ if $0 \le s \le 1$.
We have further using the inequality \eqref{triangleS} when $s\ge 1$ that
\begin{align} \label{convolution1}
 \int_{\mathbb{R}^d} |\xi|^{s} | (|\cdot|^2 \hat h)^{*j} (\xi)|  \,d\xi
&  \le  j^{s}  \int_{\mathbb{R}^d} |(|\cdot|^{s+2} \hat h)*(|\cdot|^2 \hat h)^{*(j-1)} |  \,d\xi
 \\
 & \le j^{s} \| h \|_{s+2} \|  h \|_2^{j-1}. \notag
\end{align}
Above we used Young's inequality repeatedly with $1+1 = 1+1$.

Observe that generally $\partial_t |{\hat h}| = \frac{1}{2}\Big(\partial_t \hat{h}\overline{\hat{h}}+\hat{h}\overline{\partial_t\hat{h}}\Big)|{\hat h}|^{-1}$.  Now we multiply  \eqref{Fourier2} by $\overline{\hat h} |{\hat h}|^{-1}(\xi,t)$, add the complex conjugate of the result, then integrate, and use (\ref{convolution1}) for $s=4$ and $s=2$ to obtain the following differential inequality
\begin{equation} \label{energy1}
  \frac{d}{dt} \| h \|_2 + \|  h \|_6 + \|  h \|_4
  \le \| h \|_6 \sum_{j=2}^{\infty}  \frac{j^4}{j!} \|  h \|_2^{j-1} + \| h \|_4 \sum_{j=2}^{\infty}  \frac{j^2}{j!} \|  h \|_2^{j-1} .
\end{equation}
Now we denote the function
\begin{equation}\label{infinite.series.fcn}
   f_2(y) := \sum_{j=2}^{\infty}  \frac{j^{4}  }{j!} y^{j-1} = \sum_{j=1}^{\infty}  \frac{(j+1)^3  }{j!}  y^{j}
\end{equation}
and
\begin{equation}
   f_0(y) := \sum_{j=2}^{\infty}  \frac{j^{2}  }{j!} y^{j-1} = \sum_{j=1}^{\infty}  \frac{(j+1)  }{j!}  y^{j}\leq f_2(y).
\end{equation}
Then \eqref{infinite.series.fcn} defines an entire function which is strictly increasing for $y \ge 0$ with $f_2(0)=0$.  In particular we choose the value $y_*$ such that
$f_2(y_*)=1$.

Then (\ref{energy1}) can be recast as
\begin{equation} \label{energy2}
  \frac{d}{dt} \| h \|_2 + (\| h \|_6+ \|h\|_4)
  \le  (\| h \|_6+ \|h\|_4) f_2 \big( \| h \|_2 \big).
\end{equation}
If the initial data satisfies
\begin{equation} \label{condition1}
 \| h_{0} \|_2 < y_*,
\end{equation}
then we can show that $ \|  h  \|_2(t)$ is a decreasing function of $t$.
{ Note that $y_* = y_{2*}$ in the notation from \eqref{condition2} below. }  In particular
$$
f_2 \big( \|  h  \|_2(t) \big)
\le
f_2 \big( \| h_{0} \|_2 \big) < 1.
$$
Using this calculation then (\ref{energy2}) becomes
\begin{equation} \label{energy3}
  \frac{d}{dt} \| h \|_2
  +
  \sigma_{2,1} \|  h \|_6
  \le   0,
\end{equation}
where
\begin{equation}\label{sigma.constant.def}
\sigma_{2,1} \eqdef 1-f_2( \|  h_{0} \|_2 ) >0.
\end{equation}
In particular if \eqref{condition1} holds, then $\|  h  \|_2(t) < y_*$ will continue to hold for a short time, which allows us to establish \eqref{energy3}.    The inequality (\ref{energy3}) then defines a free energy and shows the dissipation production.

At the end of this section we look closer at the function $f_2(y)$:
$$
   f_2(y) = \sum_{j=1}^{\infty}  \frac{(j+1)^3  y^{j}}{j!}
   = \sum_{j=1}^{\infty}  \frac{( j(j-1)(j-2) + 6j(j-1) + 7j +1)  y^{j}}{j!},
$$
which gives
\begin{equation}\label{fn1}
f_2(y)   = (y^3 + 6y^2+7y+1)e^y-1 \,.
\end{equation}
We know that $f_2(0)=0$ and $f_2(y)$ is strictly increasing. Let $y_*$ satisfy
\begin{equation}  \label{fn2}
 (y_{*}^3+ 6y_{*}^2+7y_*+1)e^{y_*}-1 = 1.
\end{equation}
Then $f_2(y_*) = 1$ as above.

\subsubsection{A priori estimate in the high order s-norm}\label{sec:highorder}
In this section we prove a high order estimate for any real number $s> -1$. 

To extend this analysis to the case where $s \ne 2$ we consider infinite series:
\begin{equation}
\label{infinite.series.fcn.s}
   f_s(y) = \sum_{j=2}^{\infty}  \frac{j^{s+2} }{j!}  y^{j-1} = \sum_{j=1}^{\infty}  \frac{(j+1)^{s+1}  }{j!} y^{j}.
\end{equation}
Again $f_s(0)=0$ and $f_s(y)$ is a strictly increasing entire function for any real $s$. We further remark that for $r \ge s$ we have the inequality
\begin{equation}
\label{infinite.series.fcn.y.s}
   f_s(y) \le f_r(y), \quad s \le r, \quad \forall y\ge 0.
\end{equation}
We further have a simple recursive relation
$$
   f_s(y) = \frac{d}{dy} \big(y f_{s-1}(y) \big) , \quad
   f_{-1}(y) =  e^y - 1.
$$
This allows us to compute $f_s(y)$ for any $s$ a non-negative integer as in \eqref{fn1}.

Similar  to the previous section, we have
 \begin{align} \label{Fourier3}
   \partial_t  |\xi|^{s} \hat h(\xi,t) & + |\xi|^{s+4} \hat h(\xi,t) + |\xi|^{s+2} \hat h(\xi,t) \\
 &   = -  |\xi|^{s+2} \sum_{j=2}^{\infty}  \frac{1}{j!} (|\xi|^2 \hat h)^{*j} (\xi,t)
   -  |\xi|^{s} \sum_{j=2}^{\infty}  \frac{1}{j!} (|\xi|^2 \hat h)^{*j} (\xi,t) . \notag
\end{align}
Using (\ref{convolution1}) and (\ref{Fourier3}), one has
\begin{align} \label{energy4}
  \frac{d}{dt} \| h \|_s & + \|  h \|_{s+4}+ \|  h \|_{s+2}
  \le  \\
  & \|  h \|_{s+4} \sum_{j=2}^{\infty}  \frac{j^{s+2}}{j!} \| h \|_2^{j-1} + \|  h \|_{s+2} \sum_{j=2}^{\infty}  \frac{j^{s}}{j!} \| h \|_2^{j-1}.  \notag
\end{align}
Since $f_{s-1}(y)\leq f_s$ for any $y>0$, from definition of $f_s$ in \eqref{infinite.series.fcn.s}, we recast (\ref{energy4}) as
\begin{equation} \notag
  \frac{d}{dt} \|  h \|_s + \| h \|_{s+4} + \| h \|_{s+2}
  \le  (\| h \|_{s+4} + \| h \|_{s+2}) f_s \big( \| h \|_2\big).
\end{equation}
Let $y_{s*}$ satisfy $f_s(y_{s*}) = 1$.
If
\begin{equation} \label{condition2}
 \| h_{0} \|_2 < \min(y_{s*}, y_*).
\end{equation}
Note that by \eqref{infinite.series.fcn.y.s} we have that $y_{s*} \le y_{r*}$ for $s \le r$.  In particular we are using $y_{2*} = y_*$ in \eqref{condition1} and therefore $y_{s*} \le y_*$ whenver $s \le 2$.

Then by \eqref{energy3} we have
$$
f_s \big( \| h(\cdot,t) \|_2 \big)
\le
f_s \big( \| h_{0} \|_2 \big) < 1 \,.
$$
Hence we conclude the energy-dissipation relation
\begin{equation} \label{energy-diss2}
  \frac{d}{dt} \| h(\cdot,t) \|_s
   +
 \sigma_{s,1} (\| h(\cdot,t) \|_{s+4} + \| h(\cdot,t) \|_{s+2})\,
  \le  0 , \,
\end{equation}
when (\ref{condition2}) holds.  Here we define $\sigma_{s,1} \eqdef \big( 1-f_s( \|  h_{0} \|_2 \big)>0$.

 \subsection{Proof of the Theorem \ref{Cor.Main.Theorem}}\label{sec:largedecay}

In this section we prove the Theorem \ref{Cor.Main.Theorem} by the following decay lemma from Patel-Strain \cite{MR3683311}:

\begin{lemma}\label{decaylemma}
Suppose $g=g(t,x)$ is a smooth function with $g(0,x) = g_0(x)$ and assume that for some $\ss \in \mathbb{R}$, $\|g_0\|_{\ss} < \infty$ and
$$
\|g(t)\|_{\rr,\infty} \leq C_{0}
$$
for some $\rr \ge -\Ddim$ satisfying  $ \rr < \ss$.
Let the following differential inequality hold for $\gamma>0$ and for some $C>0$:
	$$
	\frac{d}{dt}\|g\|_{\ss} \leq -C \|g\|_{\ss +\gamma}.
	$$
	  Then we have the uniform in time estimate
	$$
	\|g\|_{\ss}(t) \lesssim \left( \|g_0\|_{\ss}+ C_{0}\right) (1+t)^{-(\ss-\rr)/\gamma}.
	$$
\end{lemma}

\begin{proof}[Proof of Theorem \ref{Cor.Main.Theorem}]
For any $s>-1$ from \eqref{energy-diss2} we have
\begin{equation} \label{energy-diss00}
  \frac{d}{dt} \| h(\cdot,t) \|_s
   \leq -
 \sigma_{s,1} ( \| h(\cdot,t) \|_{s+4}+ \| h(\cdot,t) \|_{s+2}) \,
   \,.
\end{equation}
From the assumption, $\| h_0 \|_{s_2} <\infty$ and $\| h_0 \|_{s_1} <\infty$ for some $s_2 >s_1>-1$. Therefore by
we know
\begin{equation}
\|h\|_{s_1,\infty}\leq \|h\|_{s_1}\leq \| h_0 \|_{s_1} <\infty.
\end{equation}
Then we can apply Lemma \ref{decaylemma} with $\mu=s_2, \gamma=2, \rho=s_1$ to see that \eqref{energy-diss00} implies
\begin{equation}
\|h\|_{s_2}\leq (\|h_0\|_{s_2}+C_0)(1+t)^{-\frac{s_2-s_1}{2}}.
\end{equation}
\end{proof}

\end{document}